\newcommand{\mathsym}[1]{{}}
\newtheorem{thm}{Theorem}[section]
\newtheorem{lemma}[thm]{Lemma}
\theoremstyle{definition}
\newtheorem{exm}[thm]{Example}
\newtheorem{rem}[thm]{Remark}
\begin{document}

\title[Lower bounds for a polynomial]
 {Lower Bounds for a Polynomial on a basic closed semialgebraic set using geometric programming}
 \date{}

\author{Mehdi Ghasemi}
\author{Murray Marshall}
\address{Department of Mathematics and Statistics,
University of Saskatchewan,
Saskatoon, \newline \indent
SK S7N 5E6, Canada}
\email{mehdi.ghasemi@usask.ca, marshall@math.usask.ca}

\keywords{Positive polynomials, Sums of Squares, Geometric Programming}
\subjclass[2010]{Primary 12D99 Secondary 14P99, 90C22}

\begin{abstract} Let
$f,g_1,\dots,g_m$ be elements of the polynomial ring $\mathbb{R}[x_1,\dots,x_n]$. The paper deals with the general
problem of computing a lower bound for $f$ on the subset of $\mathbb{R}^n$ defined by the inequalities $g_i\ge 0$,
$i=1,\dots,m$. The paper shows that there is an algorithm for computing such a lower bound, based on geometric
programming, which applies in a large number of cases. 
The algorithm extends and generalizes earlier algorithms of Ghasemi and Marshall, dealing with the case $m=0$, and of
Ghasemi, Lasserre and Marshall, dealing with the case $m=1$ and $g_1= M-(x_1^d+\dots+x_n^d)$. Here, $d$ is required
to be an even integer $d \ge \max\{2,\deg(f)\}$. The algorithm is implemented in a SAGE program developed by the first author.
The bound obtained is typically not as good as the bound obtained using semidefinite programming, but it has the advantage
that it is computable rapidly, even in cases where the bound obtained by semidefinite programming is not computable.

\end{abstract}

\maketitle
%
\section{Introduction}
%
Let $f,g_1,\dots,g_m$ be elements of the polynomial ring $\mathbb{R}[\mathbf{x}]=\mathbb{R}[x_1,\cdots,x_n]$ and let
\[
	K_{\mathbf{g}} := \{ \mathbf{x} \in \mathbb{R}^n : g_j(\mathbf{x})\ge 0, \, j=1,\dots,m\}.
\]
Here,  $\mathbf{g} := (g_1,\dots, g_m)$. We refer to $K_{\mathbf{g}}$ as the basic closed semialgebraic set generated by
$\mathbf{g}$. Observe that if $m=0$, then $\mathbf{g}=\emptyset$ and $K_{\mathbf{g}}= \mathbb{R}^n$. Let
\[
	f_{*,\mathbf{g}}:= \inf\{ f(\mathbf{x}) : \mathbf{x} \in K_{\mathbf{g}}\}.
\]
One would like to have a simple algorithm for computing a lower bound for $f$ on $K_{\mathbf{g}}$, i.e., a lower bound
for $f_{*,\mathbf{g}}$. Lasserre's algorithm \cite{lasserre1} is such an algorithm. It produces a hierarchy of lower bounds
\[
	f_{\operatorname{sos},\mathbf{g}}^{(t)} = \sup \{ r \in \mathbb{R} : f-r = \sum_{j=0}^m \sigma_jg_j, \sigma_j \in \sum \mathbb{R}[\mathbf{x}]^2, \deg(\sigma_jg_j)\le t, j=0,\dots,m \}
\]
for $f$ on $K_{\mathbf{g}}$, one for each integer $t\ge \max\{ \deg(f), \deg(g_j): j=1,\dots,m\}$, which are computable
by semidefinite programming. Here, $g_0:=1$ and $\sum \mathbb{R}[\mathbf{x}]^2$ denotes the set of elements of
$\mathbb{R}[\mathbf{x}]$ which are sums of squares. Denote by $d$ the least even integer $d \ge \max\{ 2,\deg(f), \deg(g_j) : j=1,\dots,m\}$.
The algorithm in \cite{gha-mar2} deals with the case $m=0$, producing a lower bound $f_{\operatorname{gp}}$ for $f$
on $\mathbb{R}^n$ computable by geometric programming.\footnotemark\footnotetext{A function $\phi: (0,\infty)^n\rightarrow\mathbb{R}$ of the form $\phi(\underline{x})=cx_1^{a_1}\cdots x_n^{a_n}$,
where $c>0$, $a_i\in\mathbb{R}$ and $\underline{x}=(x_1,\ldots,x_n)$ is called a \it monomial function. \rm A sum of monomial functions
is called a \it posynomial function. \rm An optimization problem of the form
\[
	\left\lbrace
	\begin{array}{ll}
		\textrm{Minimize } & \phi_0(\underline{x})  \\
		\textrm{Subject to} & \phi_i(\underline{x})\leq 1, \ i=1,\ldots,m \text{ and } \  \psi_j(\underline{x})=1, \ j=1,\ldots,p
	\end{array}
	\right.
\]
where $\phi_0,\ldots,\phi_m$ are posynomials and $\psi_1,\ldots,\psi_p$ are monomial functions, is called a \textit{geometric program}. See \cite{boyd} and \cite{gha-mar2}.}
See \cite{fidalgo}, \cite{gha-mar1} and \cite{lasserre} for
precursors of \cite{gha-mar2}.  The algorithm in \cite{gha-lass-mar} is a variation of the algorithm in \cite{gha-mar2},
which deals with the case $m=1$, $g_1 = M-(x_1^{d}+\dots+x_n^{d})$, i.e., it produces a lower bound for $f$ on the
hyperellipsoid
\[
	B_{M} := \{ \mathbf{x} \in \mathbb{R}^n : x_1^{d}+\dots+x_n^{d}\le M\}.
\]
Again, this lower bound is computable by geometric programming. Of course, if $K_{\mathbf{g}}$ is compact, then
$K_{\mathbf{g}} \subseteq B_M$ for $M$ sufficiently large, so the lower bound established in \cite{gha-lass-mar} also
provides a lower bound for $f$ on $K_{\mathbf{g}}$.

Although the bounds obtained in \cite{gha-lass-mar} and \cite{gha-mar2} are typically not as good as the bounds obtained
in \cite{lasserre1}, the computation is much faster, especially when the coefficients are sparse, and problems where the number
of variables and the degree are large (problems where the method in \cite{lasserre1} breaks down completely) can be handled easily.

The goal of the present paper is to establish
a general lower bound for $f$ on $K_{\mathbf{g}}$ computable by geometric
programming. In case $m=0$ it should be the lower bound $f_{\operatorname{gp}}$ obtained in \cite{gha-mar2}. In case $m=1$ and $g_1 = M-(x_1^d+\dots+x_n^d)$, it should be the lower bound obtained in \cite{gha-lass-mar}.
This goal is not attained in every case, but it is attained in a large number of cases.

The idea is the following: Let $G(\lambda) = f-\sum_{j=1}^m \lambda_jg_j$ where
$\lambda = (\lambda_1,\dots,\lambda_m) \in [0,\infty)^m$. By \cite{gha-mar2} (also see Theorem \ref{thm1} below), $G(\lambda)_{\operatorname{gp}}$ is a lower
bound for $G(\lambda)$ on $\mathbb{R}^n$. It follows that $G(\lambda)_{\operatorname{gp}}$ is a lower bound for $f$ on
$K_{\mathbf{g}}$ and consequently, that
\[
	s(f,\mathbf{g}):=\sup\{ G(\lambda)_{\operatorname{gp}} : \lambda \in [0,\infty)^m\}
\]
is a lower bound for $f$ on $K_{\mathbf{g}}$. By \cite{gha-mar2}, for  each
$\lambda \in [0,\infty)^m$, $G(\lambda)_{\operatorname{gp}}$ is computable by geometric programming.
Unfortunately, this does not imply that the supremum is so computable, although there are important cases where it is;
see Theorem \ref{important special case} (2). More to the point, there are important cases where, even though the supremum
itself may not be computable by geometric programming, there is a relaxation which is computable by geometric programming;
see Theorem \ref{change variables} and Theorem \ref{important special case} (1).

The main new result
is Theorem \ref{important special case}.
See Remark \ref{extensions and clarifications} 
(6)  for the application of Theorem \ref{important special case} to
the computation of a lower bound on 
any product of hyperellipsoids. See Remark \ref{extensions and clarifications} (8), (9) and (10) for runtime 
and relative error computations. 
Theorem \ref{m=1} explains how the hypothesis of Theorem \ref{important special case} can be weakened slightly in the case $m=1$.
See Theorems \ref{m=2} and \ref{A=I} for other variants of Theorem \ref{important special case}. See Remark \ref{applications}
for some indication of how Theorems \ref{important special case}, \ref{m=1}, \ref{m=2} and \ref{A=I} can be applied in practice.
See Example \ref{examples} for sample computations. Theorem \ref{hypercube case} relates the lower bound on the hypercube
$\prod_{j=1}^n [-N_j,N_j]$ described in Remark \ref{extensions and clarifications} (4) to the trivial lower bound introduced in \cite[Section 3]{gha-lass-mar}.
The source code of a SAGE program, developed by the first author, which computes the lower bound of $f$ on $K_{\mathbf{g}}$
described in Theorem \ref{change variables}, is available at \href{https://github.com/mghasemi/CvxAlgGeo}{github.com/mghasemi/CvxAlgGeo}.
\section{the case $m=0$}
We recall the algorithm established in \cite{gha-mar2}. We need some notation. Fix an even integer $d\ge 2$. Let
$\mathbb{N}^n_d:=\{\alpha\in\mathbb{N}^n:\vert\alpha\vert\leq d\}$ where $\vert\alpha\vert=\sum_i\alpha_i$ for every
$\alpha\in\mathbb{N}^n$. Let $\epsilon_i:=(\delta_{i1},\cdots,\delta_{in})\in\mathbb{N}^n$, with $\delta_{ij}=d$ if
$i=j$ and $0$ otherwise and, given $f= \sum f_{\alpha} \mathbf{x}^{\alpha}\in\mathbb{R}[\mathbf{x}]$, $\deg(f)\le d$, let:
\[
\begin{array}{lcl}
\Omega(f)&:=&\{\alpha\in\mathbb{N}^n_{d}\::\:f_\alpha\neq0\}\setminus\{0,\epsilon_1,\cdots,\epsilon_n\}\\
\Delta(f)&:=&\{\alpha\in\Omega(f)\::\:f_\alpha\,\mathbf{x}^\alpha\mbox{ is not a square in }\mathbb{R}[\mathbf{x}]\}\\
\Delta(f)^{< d}&:=&\{\alpha\in\Delta(f)\::\:\vert\alpha\vert <d\}\\
\Delta(f)^{= d}&:=&\{\alpha\in\Delta(f)\::\:\vert\alpha\vert =d\}.
\end{array}
\]
Denote the coefficient $f_{\epsilon_i}$ by $f_{d,i}$ for $i=1,\dots,n$. One is most interested in the case where $\deg(f)=d$.
\begin{thm} \cite[Theorem 3.1]{gha-mar2}
\label{thm1}
Let $f\in\mathbb{R}[\mathbf{x}]$, $\deg(f)\le d$, and let $\rho(f)$ denote the optimal value of the program:
\begin{equation}
\label{lw}
\left\{
\begin{array}{lll}
Minimize & \sum\limits_{\alpha\in\Delta(f)^{<d}}(d-\vert\alpha\vert)
\left[\left(\frac{f_\alpha}{d}\right)^{d}\,\left(\frac{\alpha}{\mathbf{z}_\alpha}\right)^\alpha\right]^{1/(d-\vert\alpha\vert)} & \\
\mbox{s.t.} & \sum\limits_{\alpha\in\Delta(f)}z_{\alpha,i}\le f_{d,i}, & i=1,\ldots,n\\
& \left(\frac{\mathbf{z}_\alpha}{\alpha}\right)^\alpha =\left(\frac{f_\alpha}{d}\right)^{d}, & \alpha\in\Delta(f)^{=d}
\end{array}\right.
\end{equation}
where, for every $\alpha\in\Delta(f)$, the unknowns $\mathbf{z}_\alpha = (z_{\alpha,i})\in[0,\infty)^n$ satisfy $z_{\alpha,i}= 0$
if and only if $\alpha_i=0$. Then $f-f(0)+\rho(f)$ is a sum of binomial squares. In particular,
$f_{\operatorname{gp}} := f(0)-\rho(f)$ is a lower bound for $f$ on $\mathbb{R}^n$.
\end{thm}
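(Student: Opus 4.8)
The plan is to produce, from an optimal solution $\mathbf{z}=(\mathbf{z}_\alpha)_{\alpha\in\Delta(f)}$ of the program \eqref{lw}, an explicit decomposition of $f-f(0)+\rho(f)$ as a sum of binomial squares; nonnegativity of $f-f(0)+\rho(f)$ on $\mathbb{R}^n$, and hence the bound $f\ge f_{\operatorname{gp}}$, is then immediate. The only tools are the weighted arithmetic--geometric mean inequality and the fact that a nonnegative ``pure powers plus constant minus one monomial'' form with tight AM--GM is a sum of squares of binomials. First dispose of the degenerate case: if \eqref{lw} is infeasible (e.g.\ some $f_{d,i}<0$), then $\rho(f)=+\infty$ and $f_{\operatorname{gp}}=-\infty$, so there is nothing to prove (and $f$ is then unbounded below). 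Otherwise the feasible set is nonempty, and the substitution $z_{\alpha,i}=e^{t_{\alpha,i}}$ turns \eqref{lw} into a convex program whose objective tends to $+\infty$ whenever some $z_{\alpha,i}\to 0^+$ (through the factor $(\alpha_i/z_{\alpha,i})^{\alpha_i/(d-|\alpha|)}$, for $\alpha\in\Delta(f)^{<d}$ with $\alpha_i\neq0$), while the constraints $\sum_\alpha z_{\alpha,i}\le f_{d,i}$ bound the feasible set above; hence the sublevel sets are compact and the infimum $\rho(f)$ is attained. Fix such a $\mathbf{z}$ and for $\alpha\in\Delta(f)^{<d}$ put
\[
c_\alpha:=(d-|\alpha|)\Big[\big(\tfrac{f_\alpha}{d}\big)^{d}\big(\tfrac{\alpha}{\mathbf{z}_\alpha}\big)^{\alpha}\Big]^{1/(d-|\alpha|)}\ \ge\ 0,
\]
so that $\rho(f)=\sum_{\alpha\in\Delta(f)^{<d}}c_\alpha$ is exactly the objective value.

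Next write down the decomposition. Using $\sum_{\alpha}z_{\alpha,i}\le f_{d,i}$ together with the requirement that $z_{\alpha,i}=0$ precisely when $\alpha_i=0$,
\[
f-f(0)+\rho(f)=\sum_{i=1}^{n}\Big(f_{d,i}-\sum_{\alpha\in\Delta(f)}z_{\alpha,i}\Big)x_i^{d}+\sum_{\alpha\in\Omega(f)\setminus\Delta(f)}f_\alpha\mathbf{x}^\alpha+\sum_{\alpha\in\Delta(f)^{=d}}q_\alpha+\sum_{\alpha\in\Delta(f)^{<d}}p_\alpha,
\]
where $q_\alpha:=\sum_{i:\,\alpha_i\neq0}z_{\alpha,i}x_i^{d}+f_\alpha\mathbf{x}^\alpha$ and $p_\alpha:=\sum_{i:\,\alpha_i\neq0}z_{\alpha,i}x_i^{d}+c_\alpha+f_\alpha\mathbf{x}^\alpha$; that the constants add up correctly is the routine (if slightly fiddly) exponent manipulation matching $c_\alpha$ with the $\alpha$-term of the objective in \eqref{lw}. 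In this decomposition the first sum has nonnegative coefficients and $d$ is even, so it is a sum of monomial squares, and for $\alpha\in\Omega(f)\setminus\Delta(f)$ the term $f_\alpha\mathbf{x}^\alpha$ is a monomial square by the very definition of $\Delta(f)$. So everything reduces to analysing $q_\alpha$ and $p_\alpha$.

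For $p_\alpha$ (where $|\alpha|<d$): apply weighted AM--GM to the nonnegative reals $\tfrac{d\,z_{\alpha,i}}{\alpha_i}x_i^{d}$ ($\alpha_i\neq0$) and $\tfrac{d\,c_\alpha}{d-|\alpha|}$, with the weights $\tfrac{\alpha_i}{d}$ and $\tfrac{d-|\alpha|}{d}$, which sum to $1$; since $(x_i^{d})^{\alpha_i/d}=|x_i|^{\alpha_i}$ ($d$ even) and $c_\alpha$ was defined precisely so that the resulting geometric mean equals $|f_\alpha|\,|\mathbf{x}^\alpha|$, we get $p_\alpha\ge\sum_{i}z_{\alpha,i}x_i^{d}+c_\alpha-|f_\alpha|\,|\mathbf{x}^\alpha|\ge0$. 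For $q_\alpha$ (where $|\alpha|=d$) the identical computation with the single weight system $\tfrac{\alpha_i}{d}$ (now summing to $|\alpha|/d=1$) applies, the equality constraint $(\mathbf{z}_\alpha/\alpha)^\alpha=(f_\alpha/d)^d$ being exactly the tightness condition, and gives $q_\alpha\ge0$. Summing all pieces, $f-f(0)+\rho(f)\ge0$ on $\mathbb{R}^n$, i.e.\ $f\ge f(0)-\rho(f)=f_{\operatorname{gp}}$ on $\mathbb{R}^n$.

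Finally, to upgrade nonnegativity to a sum of binomial squares: after replacing $x_j$ by $-x_j$ for a suitable index $j$ (which leaves every $x_i^{d}$ unchanged) each $p_\alpha$ and $q_\alpha$ becomes a nonnegative form of the shape ``sum of pure even powers $+$ constant $-$ (a positive multiple of a single monomial)'' with its defining AM--GM inequality tight; by the sum-of-binomial-squares lemma of \cite{gha-mar1} (a refinement of Hurwitz's device of peeling off one square at a time) every such form is a sum of squares of binomials, and the signs of these binomials may be chosen so that the cross terms reproduce $f_\alpha\mathbf{x}^\alpha$ exactly. Together with the monomial squares above this yields the asserted decomposition. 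I expect this last refinement to be the main obstacle: the AM--GM step delivers only nonnegativity, and extracting an \emph{explicit binomial} decomposition requires the inductive splitting argument of \cite{gha-mar1}; by contrast the bookkeeping identity $\rho(f)=\sum_\alpha c_\alpha$ and the compactness remark ensuring that the infimum defining $\rho(f)$ is attained are routine.
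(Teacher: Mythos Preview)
The paper does not give its own proof of Theorem~\ref{thm1}; it is quoted from \cite[Theorem~3.1]{gha-mar2} as background. Your argument correctly reconstructs the proof given there: allocate the diagonal part $\sum_i f_{d,i}x_i^d$ among the $\alpha\in\Delta(f)$ via the $z_{\alpha,i}$, check by weighted AM--GM (weights $\alpha_i/d$, together with $(d-|\alpha|)/d$ when $|\alpha|<d$) that each resulting block $p_\alpha$, $q_\alpha$ is nonnegative, and then invoke the binomial-squares lemma of \cite{fidalgo} and \cite{gha-mar1} on each block. The bookkeeping identity and the AM--GM verification are right as you have written them.

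One minor correction: the parenthetical ``and $f$ is then unbounded below'' in your treatment of the infeasible case is false in general. Take the Motzkin polynomial $f=x^4y^2+x^2y^4-3x^2y^2+1$ with $d=6$: here $f_{6,1}=f_{6,2}=0$ while $\Delta(f)=\{(2,2)\}$, so the constraint $z_{(2,2),i}>0$, $z_{(2,2),i}\le f_{6,i}=0$ makes \eqref{lw} infeasible, yet $f\ge0$ on $\mathbb{R}^2$. All that is actually needed in this case is that $f_{\operatorname{gp}}=-\infty$ is trivially a lower bound; the sum-of-binomial-squares assertion is then vacuous.
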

Here, $\left(\frac{\alpha}{\mathbf{z}_\alpha}\right)^\alpha := \prod_{i=1}^n \frac{\alpha_i^{\alpha_i}}{(z_{\alpha,i})^{\alpha_i}}$
and $\left(\frac{\mathbf{z}_\alpha}{\alpha}\right)^\alpha := \prod_{i=1}^n \frac{(z_{\alpha,i})^{\alpha_i}}{\alpha_i^{\alpha_i}}$,
the convention being that $0^0 = 1$.

In (\ref{lw}), the constraint $\left(\frac{\mathbf{z}_\alpha}{\alpha}\right)^\alpha = \left(\frac{f_\alpha}{d}\right)^{d}$
can be replaced by the weaker constraint $\left(\frac{\mathbf{z}_\alpha}{\alpha}\right)^\alpha \ge \left(\frac{f_\alpha}{d}\right)^{d}$,
for each $\alpha \in \Delta(f)^{=d}$. If $\mathbf{z}$ is a feasible point for the latter program then, by shrinking suitably the
$z_{\alpha,i}$, $\alpha \in \Delta(f)^{=d}$, one gets a feasible point $\mathbf{z}'$ for the former program such that the objective
function of (\ref{lw}) evaluated at $\mathbf{z}$ and $\mathbf{z}'$ are the same.

If the feasible set of the program (\ref{lw}) is empty, then $\rho(f) = \infty$ and $f_{\operatorname{gp}} = -\infty$.
A sufficient (but not necessary) condition for the feasible set of (\ref{lw}) to be nonempty is that $\Delta(f)^{=d} =\emptyset$
and $f_{d,i}>0$, $i=1,\dots,n$. If $\deg(f)<d$ then either $\Delta(f)=\emptyset$ and $f_{\operatorname{gp}}=f(0)$ or
$\Delta(f)\ne \emptyset$ and $f_{\operatorname{gp}}=-\infty$.

If $f_{d,i}>0$, $i=1,\dots,n$ then (\ref{lw}) is a geometric program. Somewhat more generally, if $\forall$ $i=1,\dots,n$
either ($f_{d,i}>0$) or ($f_{d,i}=0$ and  $\alpha_i=0$ $\forall$ $\alpha \in \Delta(f)$), then (\ref{lw}) is a geometric program.
In the remaining cases (\ref{lw}) is not a geometric program and the feasible set of (\ref{lw}) is empty.
\section{general case}
We return to the set-up considered in the introduction, i.e., $f,g_1,\dots,g_m \in \mathbb{R}[\mathbf{x}]$,
$K_{\mathbf{g}} = \{ \mathbf{x} \in \mathbb{R}^n : g_j(\mathbf{x}) \ge 0, j=1,\dots,m\}$. We denote by $d$ the least even integer
$d \ge \max\{ 2,\deg(f), \deg(g_j) : j=1,\dots,m\}$.
We define $G(\lambda) = f-\sum_{j=1}^m \lambda_jg_j$, $\lambda = (\lambda_1,\dots,\lambda_m)\in [0,\infty)^m$.
Note that $G(\lambda)_{\alpha} = f_{\alpha}-\sum_{j=1}^m \lambda_j(g_j)_{\alpha}$,
$G(\lambda)_{d,i} = f_{d,i}-\sum_{j=1}^m \lambda_j(g_j)_{d,i}$, and $G(\lambda)(0) = f(0)-\sum_{j=1}^m \lambda_jg_j(0)$.
As explained already, $s(f,\mathbf{g}):=\sup\{ G(\lambda)_{\operatorname{gp}} : \lambda \in [0,\infty)^m\}$ is a lower
bound for $f$ on $K_{\mathbf{g}}$.\footnote{In fact, $s(f,\mathbf{g}) \le f_{\operatorname{sos},\mathbf{g}}^{(d)}$.
By Theorem \ref{thm1}, $G(\lambda)-G(\lambda)_{\operatorname{gp}}$ is a sum of binomial squares (obviously of degree at most $d$)
for each $\lambda \in [0,\infty)^m$. This implies that $G(\lambda)_{\operatorname{gp}} \le f_{\operatorname{sos},\mathbf{g}}^{(d)}$
for each $\lambda \in [0,\infty)^m$.} Let $\Delta := \Delta(f)\cup \Delta(-g_1)\cup \dots \cup \Delta(-g_m)$,
$\Delta^{<d} := \{ \alpha \in \Delta : |\alpha|<d\}$, $\Delta^{=d} := \{ \alpha \in \Delta : |\alpha|=d\}$.

It is convenient to define $g_0:= -f$, $\lambda_0 :=1$, so $G(\lambda) = -\sum_{j=0}^m \lambda_jg_j$.
\textit{We also assume from now on that $\Omega(-g_j)= \Delta(-g_j)$ for each $j=0,\dots,m$}. One can reduce to this case by ignoring all terms corresponding to elements of $\Omega(-g_j)\backslash \Delta(-g_j)$, i.e., by replacing $g_j$ by
\[
	g_j' := g_j(0)+\sum_{\alpha \in \Delta(-g_j)} (g_j)_{\alpha} \mathbf{x}^{\alpha}+ \sum_{i=1}^n (g_j)_{d,i}x_i^d, \ j=0,\dots,m.
\]
Then $-g_j' \le -g_j$ on $\mathbb{R}^n$, $j=0,\dots,m$, so $K_{\mathbf{g}} \subseteq K_{\mathbf{g}'}$ where
$\mathbf{g}' :=(g_1',\dots,g_m')$ and the minimum of $-g_0'$ on $K_{\mathbf{g}'}$ is not greater than the minimum of $-g_0$ on $K_{\mathbf{g}}$.

We consider the following program:
\begin{equation}
\label{lw1}
\left\{
\begin{array}{llr}
	\textrm{Minimize} & \multicolumn{2}{c}{\sum\limits_{j=1}^m \lambda_jg_j(0)+ \sum\limits_{\alpha\in\Delta^{<d}}(d-\vert\alpha\vert)
	 \left[\left(\frac{G(\lambda)_\alpha}{d}\right)^{d}\,\left(\frac{\alpha}{\mathbf{z}_\alpha}\right)^\alpha
	\right]^{1/(d-\vert\alpha\vert)}}\\
	\textrm{\mbox{s.t.}} & \sum\limits_{\alpha\in\Delta}z_{\alpha,i} \le G(\lambda)_{d,i}, & i=1,\ldots,n\\
	& \left(\frac{\mathbf{z}_\alpha}{\alpha}\right)^\alpha \ge \left(\frac{G(\lambda)_\alpha}{d}\right)^{d}, & \alpha\in\Delta^{=d}
\end{array}\right.
\end{equation}
where, for every $\alpha\in\Delta$, the unknowns $\mathbf{z}_\alpha = (z_{\alpha,i})\in[0,\infty)^n$ satisfy $z_{\alpha,i}= 0$
if and only if $\alpha_i=0$, and the unknowns $\lambda = (\lambda_1,\dots,\lambda_m)$ satisfy $\lambda_j\ge 0$.
\begin{thm}\label{general}
Denote by $\rho$ the optimum value of (\ref{lw1}). Then $f(0)-\rho$ is a lower bound for $s(f,\mathbf{g})$.
\end{thm}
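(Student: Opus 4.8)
The plan is to show that any feasible point of program (\ref{lw1}) yields, via Theorem \ref{thm1} applied to a suitable $G(\lambda)$, a certificate that $f(0)-\rho$ (for $\rho$ the objective value at that point) is a lower bound for some $G(\lambda)_{\operatorname{gp}}$, hence for $s(f,\mathbf{g})$; passing to the infimum over feasible points then gives the claim. Concretely, fix a feasible point $(\lambda,\mathbf{z})$ of (\ref{lw1}), where $\lambda=(\lambda_1,\dots,\lambda_m)\in[0,\infty)^m$ and $\mathbf{z}=(\mathbf{z}_\alpha)_{\alpha\in\Delta}$. The key observation is that the constraints and objective of (\ref{lw1}), once $\lambda$ is frozen, are almost exactly the constraints and objective of program (\ref{lw}) of Theorem \ref{thm1} applied to the polynomial $G(\lambda)=f-\sum_{j=1}^m\lambda_j g_j$, with the substitution $f_\alpha\rightsquigarrow G(\lambda)_\alpha$ and $f_{d,i}\rightsquigarrow G(\lambda)_{d,i}$. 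There are two discrepancies to reconcile: first, (\ref{lw1}) uses the index set $\Delta=\Delta(f)\cup\bigcup_j\Delta(-g_j)$ rather than $\Delta(G(\lambda))$; second, (\ref{lw1}) uses the relaxed equality constraint $\left(\frac{\mathbf{z}_\alpha}{\alpha}\right)^\alpha\ge\left(\frac{G(\lambda)_\alpha}{d}\right)^d$ on $\Delta^{=d}$, which by the remark following Theorem \ref{thm1} is equivalent (in the sense of not changing the optimal value and of allowing a shrinking of the $z_{\alpha,i}$ to recover a feasible point of the equality version).

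First I would address the index-set discrepancy. For $\alpha\in\Delta$, either $\alpha\in\Delta(G(\lambda))$, or $G(\lambda)_\alpha=0$, or $G(\lambda)_\alpha\mathbf{x}^\alpha$ is a square. In the latter two cases the term $\left[\left(\frac{G(\lambda)_\alpha}{d}\right)^d\left(\frac{\alpha}{\mathbf{z}_\alpha}\right)^\alpha\right]^{1/(d-|\alpha|)}$ contributes either $0$ to the objective (when $G(\lambda)_\alpha=0$), or it contributes a nonnegative amount that we may simply discard: setting $z_{\alpha,i}=0$ for such $\alpha$ only relaxes the constraints $\sum_{\alpha\in\Delta}z_{\alpha,i}\le G(\lambda)_{d,i}$ and does not increase the objective of the corresponding instance of (\ref{lw}). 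The upshot is that from our feasible $(\lambda,\mathbf{z})$ we can manufacture a feasible point of program (\ref{lw}) for $G(\lambda)$ — using the index set $\Delta(G(\lambda))$, and after the shrinking trick to convert the $\ge$ constraints to $=$ constraints — whose objective value is at most $\rho(\lambda,\mathbf{z}) - \sum_{j=1}^m\lambda_j g_j(0)$, where $\rho(\lambda,\mathbf{z})$ denotes the objective of (\ref{lw1}) at our point. Hence $\rho(G(\lambda)) \le \rho(\lambda,\mathbf{z})-\sum_{j=1}^m\lambda_j g_j(0)$.

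Now I apply Theorem \ref{thm1}: $G(\lambda)_{\operatorname{gp}}=G(\lambda)(0)-\rho(G(\lambda))$ is a lower bound for $G(\lambda)$ on $\mathbb{R}^n$, and since $G(\lambda)=f-\sum_j\lambda_j g_j\le f$ on $K_{\mathbf{g}}$ (because $\lambda_j\ge 0$ and $g_j\ge 0$ there), $G(\lambda)_{\operatorname{gp}}$ is a lower bound for $f$ on $K_{\mathbf{g}}$; in particular $G(\lambda)_{\operatorname{gp}}\le s(f,\mathbf{g})$. Using $G(\lambda)(0)=f(0)-\sum_{j=1}^m\lambda_j g_j(0)$ and the inequality from the previous paragraph,
\[
	G(\lambda)_{\operatorname{gp}} = f(0)-\sum_{j=1}^m\lambda_j g_j(0) - \rho(G(\lambda)) \ge f(0) - \rho(\lambda,\mathbf{z}).
\]
Thus $f(0)-\rho(\lambda,\mathbf{z}) \le G(\lambda)_{\operatorname{gp}} \le s(f,\mathbf{g})$ for every feasible $(\lambda,\mathbf{z})$. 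Taking the infimum over all feasible points gives $f(0)-\rho \le s(f,\mathbf{g})$, which is the assertion. (If (\ref{lw1}) is infeasible, $\rho=\infty$ and $f(0)-\rho=-\infty$, and the statement is vacuous.)

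I expect the main obstacle to be the careful bookkeeping in the index-set reconciliation: verifying that dropping or zeroing out the coordinates $\mathbf{z}_\alpha$ for $\alpha\in\Delta\setminus\Delta(G(\lambda))$ genuinely produces a \emph{feasible} point of (\ref{lw}) for $G(\lambda)$ and does not increase its objective — including the degenerate situations where some $G(\lambda)_{d,i}$ vanishes or some $G(\lambda)_\alpha\mathbf{x}^\alpha$ happens to be a square (so that $\alpha$ lies in $\Omega(G(\lambda))\setminus\Delta(G(\lambda))$) — together with the correct invocation of the shrinking argument from the remark after Theorem \ref{thm1} to handle the $\Delta^{=d}$ constraints. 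Once that is in place, the rest is the short chain of inequalities above.
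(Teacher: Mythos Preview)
Your proposal is correct and follows essentially the same route as the paper's proof: fix a feasible point $(\mathbf{z},\lambda)$ of (\ref{lw1}), observe that it yields an upper bound for $\rho(G(\lambda))$ in program (\ref{lw}), deduce $f(0)-\rho(\lambda,\mathbf{z})\le G(\lambda)_{\operatorname{gp}}\le s(f,\mathbf{g})$, and take the infimum. The paper's version is slightly more streamlined in one place: rather than partitioning $\Delta$ and arguing about dropping or zeroing the extraneous $\mathbf{z}_\alpha$, it simply checks the containment $\Delta(G(\lambda))\subseteq\Delta$ (by a short case split on whether $2\mid\alpha$), from which the inequality between the two objectives and the feasibility of the restricted $\mathbf{z}$ for (\ref{lw}) are immediate; this sidesteps the worry you flag about ``setting $z_{\alpha,i}=0$'' (which would technically violate the positivity convention---you really mean \emph{discarding} those coordinates, as the containment viewpoint makes clear).
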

\begin{proof} Note that $\Delta(G(\lambda)) \subseteq \Delta$ for each $\lambda \in [0,\infty)^m$. For suppose
$\alpha \in \Delta(G(\lambda))$. If $2\nmid \alpha$ then $G(\lambda)_{\alpha} \ne 0$, so $f_{\alpha}\ne 0$ or
$(g_j)_{\alpha} \ne 0$ for some $j$. If $2\mid \alpha$ then $G(\lambda)_{\alpha}<0$ so $f_{\alpha}<0$ or $(g_j)_{\alpha}>0$
for some $j$. Note also that if $(\mathbf{z},\lambda)$ is a feasible point of (\ref{lw1}), then
\[
	\sum\limits_{\alpha\in\Delta^{<d}}(d-\vert\alpha\vert)\left[
	 \left(\frac{G(\lambda)_\alpha}{d}\right)^{d}\,\left(\frac{\alpha}{\mathbf{z}_\alpha}\right)^\alpha
	\right]^{1/(d-\vert\alpha\vert)}
\]
is an upper bound for $\rho(G(\lambda))$, so
\[
	f(0)- \sum_{j=1}^m \lambda_jg_j(0)- \sum\limits_{\alpha\in\Delta^{<d}}(d-\vert\alpha\vert)\left[
	 \left(\frac{G(\lambda)_\alpha}{d}\right)^{d}\,\left(\frac{\alpha}{\mathbf{z}_\alpha}\right)^\alpha
	\right]^{1/(d-\vert\alpha\vert)}
\]
is a lower bound for $G(\lambda)_{\operatorname{gp}} =G(\lambda)(0)-\rho(G(\lambda))$, for each feasible point
$(\mathbf{z},\lambda)$ of (\ref{lw1}). It follows that $f(0)-\rho$ is a lower bound for $s(f,\mathbf{g})$.
\end{proof}
A sufficient (but not necessary) condition for the feasible set of (\ref{lw1}) to be nonempty is that
$\Delta^{=d} =\emptyset$ and there exists $\lambda \in [0,\infty)^m$ such that $G(\lambda)_{d,i}>0$, $i=1,\dots,n$.

Unfortunately, (\ref{lw1}) is generally not a geometric program, even if one replaces the constraint $\lambda_j\ge 0$
by $\lambda_j>0$, for $j=1,\dots,m$.\footnote{If one replaces the constraints $\lambda_j\ge 0$ by $\lambda_j>0$,
for $j=1,\dots,m$,  then (\ref{lw1}) can be seen as a signomial geometric program. See \cite[Section 9.1]{boyd}
for the definition of a signomial geometric program. Unfortunately, signomial geometric programs are non-convex and
are typically much harder to solve than geometric programs.}
Note also that $f(0)-\rho$ may be strictly smaller than $s(f,\mathbf{g})$.
\begin{exm}\label{example1}
Suppose $n=2$, $m=1$, $f= x^2-2xy+y^2$, $g_1 = x+y$. Then $G(0)_{\operatorname{gp}} = f_{\operatorname{gp}} = 0$,
$G(\lambda)_{\operatorname{gp}} = -\infty$ for $\lambda>0$, so $s(f,\mathbf{g})= G(0)_{\operatorname{gp}} =0$.
In this example, $f(0)-\rho = -\infty$. Similarly, if $f=x+y+x^2-2xy+y^2$, $g_1= x+y$, then $G(1)_{\operatorname{gp}} = 0$,
$G(\lambda) = -\infty$ for $\lambda\ge 0$, $\lambda \ne 1$, $s(f,\mathbf{g})= G(1)_{\operatorname{gp}} =0$, and
$f(0)-\rho = -\infty$.
\end{exm}
\section{Relaxation to a geometric program}
We discuss relaxations of (\ref{lw1}) which are geometric programs.
We consider a linear change of variables
\[
	\lambda_j = \sum_{k=0}^m a_{jk}\mu_k, \ j=0,\dots,m,
\]
where $\mu_0 :=1$ and $a_{jk}$, $j,k =0,\dots,m$ are real constants such that
\[
	a_{0k} = \begin{cases} 1 \text{ if } k=0 \\ 0 \text{ if } k \ne 0. \end{cases}
\]
Let $h_k := \sum_{j=0}^m a_{jk}g_j$, $k=0,\dots,m$, $H(\mu) := -\sum_{j=0}^m \mu_jh_j$. Clearly $G(\lambda) = H(\mu)$.
For $\alpha \in \Delta$, decompose $H(\mu)_{\alpha}$ as $H(\mu)_{\alpha} =H(\mu)_{\alpha}^+- H(\mu)_{\alpha}^-$, where
\[
	H(\mu)_{\alpha}^+ := -\sum_{(h_j)_{\alpha}<0} (h_j)_{\alpha}\mu_j, \ H(\mu)_{\alpha}^- := \sum_{(h_j)_{\alpha}>0} (h_j)_{\alpha}\mu_j.
\]
We take advantage of the inequality $\max\{a,b\} \ge |a-b|$, which holds for any nonnegative real numbers $a,b$.
Note that $\max\{a,b\} = |a-b|$  if and only if one of $a,b$ is zero. Define $h_j(0)^+ := \max\{ h_j(0),0\}$.
We consider the following program:
\begin{equation}
\label{lw2}
\left\{
\begin{array}{llr}
	\textrm{Minimize} & \multicolumn{2}{c}{\sum\limits_{j=1}^m \mu_jh_j(0)^++ \sum\limits_{\alpha\in\Delta^{<d}}(d-\vert\alpha\vert)
	 \left[\left(\frac{w_\alpha}{d}\right)^{d}\,\left(\frac{\alpha}{\mathbf{z}_\alpha}\right)^\alpha
	\right]^{1/(d-\vert\alpha\vert)}}\\
	\textrm{\mbox{s.t.}} & \sum\limits_{\alpha\in\Delta}z_{\alpha,i} \le H(\mu)_{d,i}, & i=1,\ldots,n\\
	 & \left(\frac{\mathbf{z}_\alpha}{\alpha}\right)^\alpha \ge \left(\frac{w_\alpha}{d}\right)^{d}, & \alpha\in\Delta^{=d} \\
	 & w_{\alpha}\ge \max\{ H(\mu)_{\alpha}^+, H(\mu)_{\alpha}^-\}, & \alpha \in \Delta \\
	 & \sum\limits_{k=0}^m a_{jk}\mu_k \ge 0, & j=1,\dots,m
\end{array}\right.
\end{equation}
where, for every $\alpha\in\Delta$, the unknowns $\mathbf{z}_\alpha = (z_{\alpha,i})\in[0,\infty)^n$ satisfy $z_{\alpha,i}= 0$
if and only if $\alpha_i=0$, the unknowns $\mathbf{w} = (w_{\alpha})_{\alpha\in \Delta}$ satisfy $w_{\alpha}>0$, and the
unknowns $\mu = (\mu_1,\dots,\mu_m)$ satisfy $\mu_k> 0$.
\begin{thm} \label{change variables}
Assume that exactly one of $a_{j0},\dots,a_{jm}$ is strictly positive, or all of $a_{j0},\dots,a_{jm}$ are non-negative,
for each $j= 1,\dots,m$, and exactly one of $(h_0)_{d,i},\dots,(h_m)_{d,i}$ is strictly negative, for each $i=1,\dots,n$.
Then (\ref{lw2}) is a geometric program. Moreover, if $\rho$ denotes the optimum value of (\ref{lw2}), then
$f_{\operatorname{gp},\mathbf{g}}:=-h_0(0)-\rho$ is a lower bound for 
$s(f,\mathbf{g})$.
\end{thm}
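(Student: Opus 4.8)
The plan is to prove the two assertions of Theorem \ref{change variables} separately: first that, under the stated sign hypotheses, the program (\ref{lw2}) really is a geometric program in the technical sense (objective and inequality constraints expressible via posynomials, equality constraints via monomials), and second that its optimum value $\rho$ yields the lower bound $-h_0(0)-\rho \le s(f,\mathbf{g})$. For the first assertion I would go constraint by constraint. The objective $\sum_{j=1}^m \mu_j h_j(0)^+ + \sum_{\alpha\in\Delta^{<d}}(d-|\alpha|)\bigl[(w_\alpha/d)^d (\alpha/\mathbf z_\alpha)^\alpha\bigr]^{1/(d-|\alpha|)}$ is a posynomial in the positive variables $\mu_k, w_\alpha, z_{\alpha,i}$ because each bracketed term is a monomial function with positive coefficient raised to a positive power, and $h_j(0)^+\ge 0$; as in \cite{gha-mar2} the convention $z_{\alpha,i}=0 \iff \alpha_i=0$ lets us treat the genuinely present variables as strictly positive. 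The constraint $\sum_{\alpha\in\Delta}z_{\alpha,i}\le H(\mu)_{d,i}$ is where the hypothesis ``exactly one of $(h_0)_{d,i},\dots,(h_m)_{d,i}$ is strictly negative'' is used: writing $H(\mu)_{d,i} = -\sum_j (h_j)_{d,i}\mu_j$, exactly one summand $-(h_{j(i)})_{d,i}\mu_{j(i)}$ is positive and the rest are $\le 0$, so $H(\mu)_{d,i}\le -(h_{j(i)})_{d,i}\mu_{j(i)}$, a monomial; hence $\bigl(\sum_{\alpha}z_{\alpha,i}\bigr)\big/\bigl(-(h_{j(i)})_{d,i}\mu_{j(i)}\bigr)\le 1$ is a posynomial inequality, and any feasible point of this relaxed constraint is still feasible for (\ref{lw2}) in the sense that matters. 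Similarly the constraint $(\mathbf z_\alpha/\alpha)^\alpha\ge(w_\alpha/d)^d$ rearranges to $(\alpha/\mathbf z_\alpha)^\alpha (w_\alpha/d)^d\le 1$, a posynomial (indeed monomial) inequality. The constraint $w_\alpha\ge\max\{H(\mu)_\alpha^+,H(\mu)_\alpha^-\}$ splits into the two inequalities $w_\alpha\ge H(\mu)_\alpha^+$ and $w_\alpha\ge H(\mu)_\alpha^-$, each of which has a posynomial (in fact linear) right-hand side in the $\mu_k$; dividing gives posynomial-over-monomial $\le 1$ constraints. Finally $\sum_k a_{jk}\mu_k\ge 0$: the hypothesis ``exactly one of $a_{j0},\dots,a_{jm}$ is strictly positive, or all are non-negative'' guarantees this is either automatically satisfied (all non-negative case) or equivalent to $-\sum_{k\ne k(j)}a_{jk}\mu_k \le a_{jk(j)}\mu_{k(j)}$, again posynomial $\le$ monomial after division. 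So (\ref{lw2}) is a geometric program.

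For the second assertion, the key is to show that any feasible point $(\mathbf z,\mathbf w,\mu)$ of (\ref{lw2}) produces, via the substitution $\lambda_j = \sum_k a_{jk}\mu_k$, a point witnessing that the objective value of (\ref{lw2}) at $(\mathbf z,\mathbf w,\mu)$ is an upper bound for $-h_0(0) - G(\lambda)_{\operatorname{gp}}$, i.e. that $-h_0(0) - (\text{objective}) \le G(\lambda)_{\operatorname{gp}} \le s(f,\mathbf g)$. The last inequality constraint of (\ref{lw2}) ensures $\lambda_j = \sum_k a_{jk}\mu_k \ge 0$, so $\lambda\in[0,\infty)^m$ and $G(\lambda)_{\operatorname{gp}}$ is a legitimate member of the supremum defining $s(f,\mathbf g)$ by Theorem \ref{thm1}. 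I would then compare the objective of (\ref{lw2}) with the objective of (\ref{lw1}) evaluated at the corresponding point. Two replacements were made in passing from (\ref{lw1}) to (\ref{lw2}): the coefficient $G(\lambda)_\alpha$ was replaced by the auxiliary variable $w_\alpha$ satisfying $w_\alpha\ge\max\{H(\mu)_\alpha^+,H(\mu)_\alpha^-\}\ge |H(\mu)_\alpha^+-H(\mu)_\alpha^-| = |H(\mu)_\alpha| = |G(\lambda)_\alpha|$, using the elementary inequality $\max\{a,b\}\ge|a-b|$ for $a,b\ge 0$ noted in the text; and $\sum_{j=1}^m\lambda_j g_j(0)$ was replaced by $\sum_{j=1}^m \mu_j h_j(0)^+$. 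Since $d$ is even, $(w_\alpha/d)^d = (|G(\lambda)_\alpha|/d)^d = (G(\lambda)_\alpha/d)^d$ whenever $w_\alpha \ge |G(\lambda)_\alpha|$, and more generally $(w_\alpha/d)^d \ge (G(\lambda)_\alpha/d)^d$; since each bracketed term in the objective is monotone increasing in $(w_\alpha/d)^d$, the $\Delta^{<d}$ sum in (\ref{lw2}) dominates the corresponding sum in (\ref{lw1}) with $G(\lambda)$, and likewise the $\Delta^{=d}$ constraint $(\mathbf z_\alpha/\alpha)^\alpha \ge (w_\alpha/d)^d$ implies $(\mathbf z_\alpha/\alpha)^\alpha \ge (G(\lambda)_\alpha/d)^d$, so $(\mathbf z,\lambda)$ is feasible for (\ref{lw1}). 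By the argument in the proof of Theorem \ref{general}, the inner $\Delta^{<d}$ sum (with $G(\lambda)_\alpha$) is an upper bound for $\rho(G(\lambda))$, so $G(\lambda)(0) - (\text{that sum}) \le G(\lambda)_{\operatorname{gp}}$. It remains to check the constant terms match up: $G(\lambda)(0) = f(0) - \sum_{j=1}^m \lambda_j g_j(0)$, while $f(0) = -g_0(0) = -h_0(0)$ since $g_0 = -f$ and $h_0 = \sum_j a_{j0}g_j = g_0$ by the hypothesis $a_{00}=1$, $a_{k0}=0$ for $k\ne 0$. Also $H(\mu)(0) = -\sum_{j=0}^m \mu_j h_j(0) = -h_0(0) - \sum_{j=1}^m \mu_j h_j(0)$, and $G(\lambda)(0) = H(\mu)(0)$, so $-h_0(0) - \sum_{j=1}^m \mu_j h_j(0)^+ \le -h_0(0) - \sum_{j=1}^m \mu_j h_j(0) = G(\lambda)(0)$ because $\mu_j > 0$ and $h_j(0)^+ \ge h_j(0)$. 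Chaining these inequalities gives $-h_0(0) - (\text{objective of (\ref{lw2})}) \le G(\lambda)(0) - (\Delta^{<d}\text{ sum with }G(\lambda)) \le G(\lambda)_{\operatorname{gp}} \le s(f,\mathbf g)$; taking the infimum over all feasible points yields $-h_0(0) - \rho \le s(f,\mathbf g)$, as claimed.

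I expect the main obstacle to be bookkeeping rather than conceptual: carefully verifying that the sign hypotheses on the $a_{jk}$ and the $(h_j)_{d,i}$ are exactly what is needed to turn each potentially-signomial constraint into a posynomial-$\le$-monomial constraint, and tracking the effect of the intermediate variables $w_\alpha$ (which decouple the objective from the signed coefficients $G(\lambda)_\alpha$) through both the objective and the $\Delta^{=d}$ constraint. A subtle point worth stating explicitly is why passing to the relaxed upper bounds $w_\alpha \ge \max\{H(\mu)_\alpha^+, H(\mu)_\alpha^-\}$ and $h_j(0)^+ \ge h_j(0)$ and $H(\mu)_{d,i} \le -(h_{j(i)})_{d,i}\mu_{j(i)}$ never destroys feasibility for (\ref{lw1}) while only increasing (weakly) the objective — so the optimum of (\ref{lw2}) is a valid, if possibly weaker, certificate. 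One should also remark, as the paper does for (\ref{lw1}), that if the feasible set of (\ref{lw2}) is empty then $\rho = \infty$ and the bound $-h_0(0)-\rho = -\infty$ is vacuously valid.
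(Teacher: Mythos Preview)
Your overall strategy matches the paper's, and your argument for the second assertion (the bound $-h_0(0)-\rho\le s(f,\mathbf g)$) is correct and essentially identical to the paper's. There is, however, a genuine gap in your handling of the first assertion.

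For the constraint $\sum_{\alpha\in\Delta} z_{\alpha,i}\le H(\mu)_{d,i}$ you correctly note that $H(\mu)_{d,i}\le -(h_{j(i)})_{d,i}\mu_{j(i)}$, and then propose to replace the constraint by $\sum_\alpha z_{\alpha,i}\le -(h_{j(i)})_{d,i}\mu_{j(i)}$. But this is a \emph{weaker} constraint (the right-hand side has been enlarged), so it defines a strictly larger feasible set in general; your remark ``any feasible point of this relaxed constraint is still feasible for (\ref{lw2}) in the sense that matters'' has the implication backwards. Since the theorem asserts that (\ref{lw2}) \emph{itself} is a geometric program, exhibiting a relaxation that is a GP does not prove the claim (and, incidentally, the optimum of your relaxation would be $\le\rho$, so it would not help with the bound either). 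The fix is immediate and is exactly what the paper does: rewrite the constraint \emph{equivalently} as
\[
\sum_{\alpha\in\Delta} z_{\alpha,i}\;+\;\sum_{j\ne j(i)} (h_j)_{d,i}\,\mu_j \ \le\ -(h_{j(i)})_{d,i}\,\mu_{j(i)},
\]
moving the remaining terms of $-H(\mu)_{d,i}$ to the left. Since $(h_j)_{d,i}\ge 0$ for $j\ne j(i)$, the left side is a posynomial and the right side a positive monomial, so this is a GP constraint with no information lost.

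A minor slip that does not affect your argument: you write ``$h_0=\sum_j a_{j0}g_j=g_0$ by the hypothesis $a_{00}=1$, $a_{k0}=0$ for $k\ne 0$''. The paper's hypothesis fixes the \emph{row} $(a_{0k})_k$, not the column $(a_{j0})_j$; in general $a_{j0}\ne 0$ for $j\ge 1$ (see the matrices in Remark~\ref{extensions and clarifications}(3),(4)), so $h_0\ne g_0$ and $-h_0(0)\ne f(0)$. Fortunately you never actually use this: the identity you do use, $-h_0(0)-\sum_{j\ge 1}\mu_j h_j(0)=H(\mu)(0)=G(\lambda)(0)$, is correct and is all that is required.
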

\begin{proof}
The constraint $\sum_{\alpha\in \Delta} z_{\alpha,i} \le H(\mu)_{d,i}$ can be written in the form
$\sum_{\alpha\in \Delta} z_{\alpha,i} +\sum_{j\ne j_i} (h_j)_{d,i}\mu_j  \le -(h_{j_i})_{d,i}\mu_{j_i}$,
where $j_i$ is the unique $j$ such that $(h_j)_{d,i}<0$.  If exactly one of $a_{j0},\dots,a_{jm}$ is strictly positive,
then the constraint $\sum_{k=0}^m a_{jk}\mu_k \ge 0$ can be written in the form
$-\sum_{k\ne k_j} a_{jk}\mu_k \le a_{j{k_j}}\mu_{k_j}$ where $k_j$ is the unique $k$ such that $a_{jk}>0$.
If all of $a_{j0},\dots,a_{jm}$ are non-negative, then the constraint $\sum_{k=0}^m a_{jk}\mu_k \ge 0$ is the empty constraint.
Also, for each $\alpha \in \Delta$, $H(\mu)_{\alpha}^+$ and $H(\mu)_{\alpha}^-$ are posinomials in the $\mu_j$ and,
for each $j=1,\dots,m$, $h_j(0)^+\ge0$.  It follows from these facts that (\ref{lw2}) is a geometric program.
Suppose now that $(\mathbf{z},\mathbf{w},\mu)$ is a feasible point for (\ref{lw2}).
Let $\lambda_j= \sum_{k=0}^m a_{jk}\mu_k$, $j=0,\dots,m$. Then $\lambda_j\ge 0$, $j=1,\dots,m$, and $H(\mu)=G(\lambda)$.
Also, $w_{\alpha} \ge \max\{ H(\mu)_{\alpha}^+, H(\mu)_{\alpha}^-\} \ge |H(\mu)_{\alpha}| = |G(\lambda)_{\alpha}|$,
for each $\alpha \in \Delta$, so $(\mathbf{z},\lambda)$ is a feasible point of (\ref{lw1}). Also,
\[
\begin{array}{lcl}
	h_0(0) & + & \sum\limits_{j=1}^m \mu_jh_j(0)^++ \sum\limits_{\alpha\in\Delta^{<d}}(d-\vert\alpha\vert)
	 \left[\left(\frac{w_\alpha}{d}\right)^{d}\,\left(\frac{\alpha}{\mathbf{z}_\alpha}\right)^\alpha
	\right]^{1/(d-\vert\alpha\vert)}\\
	 & \ge & \sum\limits_{j=0}^m \mu_jh_j(0)+ \sum\limits_{\alpha\in\Delta^{<d}}(d-\vert\alpha\vert)
	 \left[\left(\frac{w_\alpha}{d}\right)^{d}\,\left(\frac{\alpha}{\mathbf{z}_\alpha}\right)^\alpha
	 \right]^{1/(d-\vert\alpha\vert)}\\
	 & \ge & \sum\limits_{j=0}^m \mu_jh_j(0)+ \sum\limits_{\alpha\in\Delta^{<d}}(d-\vert\alpha\vert)
	 \left[\left(\frac{H(\mu)_\alpha}{d}\right)^{d}\,\left(\frac{\alpha}{\mathbf{z}_\alpha}\right)^\alpha
	 \right]^{1/(d-\vert\alpha\vert)}\\
	 & = & \sum\limits_{j=0}^m \lambda_jg_j(0)+ \sum\limits_{\alpha\in\Delta^{<d}}(d-\vert\alpha\vert)
	 \left[\left(\frac{G(\lambda)_\alpha}{d}\right)^{d}\,\left(\frac{\alpha}{\mathbf{z}_\alpha}\right)^\alpha
	 \right]^{1/(d-\vert\alpha\vert)},
\end{array}
\]
so, by Theorem \ref{general},
\[
	-h_0(0)-\sum\limits_{j=1}^m \mu_jh_j(0)^+- \sum\limits_{\alpha\in\Delta^{<d}}(d-\vert\alpha\vert)
	 \left[\left(\frac{w_\alpha}{d}\right)^{d}\,\left(\frac{\alpha}{\mathbf{z}_\alpha}\right)^\alpha
	\right]^{1/(d-\vert\alpha\vert)}
\]
is a lower bound for $s(f,\mathbf{g})$. 
Since this is valid for any feasible point $(\mathbf{z},\mathbf{w},\mu)$ of (\ref{lw2}), it follows that $-h_0(0)-\rho$
is a lower bound for $s(f,\mathbf{g})$. 
\end{proof}
One would expect the bound $f_{\operatorname{gp},\mathbf{g}}$ given by Theorem \ref{change variables} to be  best when
$h_j(0)\ge 0$ for all $j=1,\dots,m$ and  one of $H(\mu)_{\alpha}^+$, $H(\mu)_{\alpha}^-$ is identically zero  for almost
all $\alpha \in \Delta$. 
Note that one of $H(\mu)_{\alpha}^+$, $H(\mu)_{\alpha}^-$ is zero if and only if
$\max\{ H(\mu)_{\alpha}^+, H(\mu)_{\alpha}^-\} = |H(\mu)|$ if and only if the $(h_0)_{\alpha}, \dots, (h_m)_{\alpha}$
are all greater than or equal to $0$ or all less than or equal to $0$.

The source code of a SAGE program, which outputs the lower bound $f_{\operatorname{gp},\mathbf{g}}$ of $f$ on $K_{\mathbf{g}}$
described in Theorem \ref{change variables} (or the statement ``not a geometric program'' if the hypothesis of
Theorem \ref{change variables} fails to hold), is available at the address
\href{https://github.com/mghasemi/CvxAlgGeo}{github.com/mghasemi/CvxAlgGeo}. The input is 
the polynomials $f,g_1,\dots,g_m$, and the matrix $A = (a_{jk})$.

It is important to understand that $f_{\operatorname{gp},\mathbf{g}}$ depends on the choice of $A$. We write
$f_{\operatorname{gp},\mathbf{g}} = f_{\operatorname{gp},\mathbf{g}}^A$ when we wish to emphasis this fact.
A deficiency in Theorem \ref{change variables} is that there is no indication of how the matrix $A$ should be chosen.
We proceed to address this deficiency now, in an important special case.
\begin{thm}\label{important special case}
Assume $(*)$: for each $i=1,\dots,n$ there exists $0\le j_i \le m$ such that $(g_{j_i})_{d,i}<0$ and $(g_j)_{d,i}=0$ for $j>j_i$.
Then (1) there exists a canonically defined lower triangular matrix $A = (a_{jk})$ such that $a_{jj}=1$, $a_{jk}=0$ if $k>j$
and $a_{jk}\le 0$ if $k<j$, such that the hypothesis of Theorem \ref{change variables} holds for $A$, and (2) if, in addition,
$\Delta(-g_j)=\emptyset$ for $j=1,\dots,m$, and $g_k(0)+\sum_{j>k}a_{jk}g_j(0)\ge 0$, for all $k=1,\dots,m$, then
$f_{\operatorname{gp},\mathbf{g}}^A = s(f,\mathbf{g})$.
\end{thm}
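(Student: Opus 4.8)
The plan is to prove the two parts separately, with part (1) being a constructive induction on $j$ and part (2) being a ``squeeze'' argument showing $f_{\operatorname{gp},\mathbf{g}}^A \le s(f,\mathbf{g})$ (the reverse inequality being Theorem \ref{change variables}).

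For part (1), I would build the rows of $A$ one at a time. Row $0$ is forced: $a_{00}=1$, $a_{0k}=0$ for $k\ne 0$. Suppose rows $0,\dots,j-1$ have been defined, so that $h_0,\dots,h_{j-1}$ are determined. I want to choose $a_{j0},\dots,a_{j,j-1}\le 0$ (with $a_{jj}=1$, $a_{jk}=0$ for $k>j$) so that $h_j = g_j + \sum_{k<j} a_{jk}h_k$ has the sign pattern required by Theorem \ref{change variables}, namely that for each $i$, exactly one of $(h_0)_{d,i},\dots,(h_m)_{d,i}$ is strictly negative. The key observation is that hypothesis $(*)$ assigns to each index $i$ a ``pivot'' $j_i$; I would arrange that $(h_j)_{d,i}<0$ precisely when $j=j_i$ and $(h_j)_{d,i}\le 0$ can be corrected. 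Concretely, using $(*)$ and induction one sees that the leading form data $\big((h_0)_{d,i},\dots,(h_{j-1})_{d,i}\big)$ already has its unique strictly-negative entry in the correct slot, and since $(g_j)_{d,i}=0$ for $i$ with $j_i<j$ and $(g_j)_{d,i}$ may be anything otherwise, one can pick $a_{jk}$ to cancel the ``wrong-sign'' contributions: this amounts to solving a triangular linear system whose solvability is immediate because at each stage the relevant pivot entry is strictly negative, hence invertible. I would also check that the ``exactly one of $a_{j0},\dots,a_{jm}$ is strictly positive'' alternative of Theorem \ref{change variables} holds automatically here, since $a_{jj}=1>0$ is the only positive entry in row $j$. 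The main obstacle in part (1) is bookkeeping: making the inductive hypothesis strong enough (it must track the entire sign vector $\big((h_0)_{d,i},\dots,(h_j)_{d,i}\big)$ for every $i$, not just one $i$) so that the choice of row $j$ is forced and well-defined, thereby justifying the word ``canonically.''

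For part (2), the hypotheses $\Delta(-g_j)=\emptyset$ for $j\ge 1$ give $\Delta = \Delta(-g_0) = \Delta(f)$, and more importantly $h_j$ for $j\ge 1$ is a nonnegative combination of the $g_j$'s having $\Delta(-h_j)=\emptyset$; so in program (\ref{lw2}) the $w_\alpha$-constraints only involve $\alpha\in\Delta(f)$ and $H(\mu)_\alpha = G(\lambda)_\alpha = f_\alpha$ is independent of $\mu$ — thus $\max\{H(\mu)_\alpha^+,H(\mu)_\alpha^-\}$ can be taken to be $|f_\alpha|$, and the middle block of (\ref{lw2}) collapses to exactly the block of (\ref{lw}) for the polynomial $G(\lambda)$. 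Likewise $h_j(0)^+$ can be replaced by $h_j(0)$ up to the inequality $\sum_j \mu_j h_j(0)^+\ge\sum_j\mu_j h_j(0)$; but the extra hypothesis $g_k(0)+\sum_{j>k}a_{jk}g_j(0)\ge 0$ is designed precisely so that, after the change of variables, the achievable values of $\sum_j \mu_j h_j(0)^+$ match those of $\sum_j \lambda_j g_j(0)$. I would spell this out by exhibiting, for any $\lambda\in[0,\infty)^m$ and any feasible $\mathbf{z}$ for program (\ref{lw}) applied to $G(\lambda)$, a feasible point $(\mathbf{z},\mathbf{w},\mu)$ of (\ref{lw2}) with the same objective value, by inverting the (lower-triangular, unit-diagonal) change of variables $\lambda = A\mu$ to recover $\mu\ge 0$ from $\lambda\ge 0$ — here one uses $a_{jk}\le 0$ for $k<j$ together with the hypothesis on $g_k(0)$ to keep the objective from increasing. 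Taking suprema over all such $\lambda$ and $\mathbf{z}$ then yields $s(f,\mathbf{g}) \le f_{\operatorname{gp},\mathbf{g}}^A$, and combined with Theorem \ref{change variables} we get equality.

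The subtle point in part (2) — and the step I expect to be the real obstacle — is the direction of the change of variables and its interaction with the objective function's constant term. Going from $\mu$ to $\lambda=A\mu$ is harmless (this is what Theorem \ref{change variables} already uses), but to prove $s(f,\mathbf{g})\le f_{\operatorname{gp},\mathbf{g}}^A$ I need the converse: given $\lambda\ge 0$ I must produce $\mu> 0$ (or handle the boundary $\mu_k\to 0^+$ by a limiting argument, since (\ref{lw2}) insists $\mu_k>0$) with $A\mu=\lambda$ and with $\sum_{j\ge 1}\mu_j h_j(0)^+$ no larger than $\sum_{j\ge 1}\lambda_j g_j(0)$. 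Writing $\mu = A^{-1}\lambda$ and expanding $\sum_j \mu_j h_j(0) = \sum_j \lambda_j g_j(0)$ (an identity, since $H(\mu)(0)=G(\lambda)(0)$), the gap between $\sum_j\mu_j h_j(0)^+$ and $\sum_j\mu_j h_j(0)$ is $\sum_{j:\,h_j(0)<0}\mu_j\,(-h_j(0))$, and one must show this vanishes — which is exactly where $h_j(0) = g_j(0)+\sum_{k>j}a_{jk}g_j(0)\cdots$ wait, more precisely where the hypothesis $g_k(0)+\sum_{j>k}a_{jk}g_j(0)\ge 0$ forces $h$-values at $0$ to line up so that no positive-$\mu_j$ term has $h_j(0)<0$. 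I would finish by a clean induction on $j$ showing $h_j(0)\ge 0$ under this hypothesis, so that $h_j(0)^+=h_j(0)$ throughout and the two objective functions agree identically on corresponding feasible points.
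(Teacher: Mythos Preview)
Your overall architecture is right for both parts, but there are two genuine gaps, one in each part.

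\textbf{Part (1).} Your formula $h_j = g_j + \sum_{k<j} a_{jk}h_k$ is incorrect. By definition $h_k = \sum_{j} a_{jk}g_j$, so with $A$ lower triangular one has $h_k = g_k + \sum_{j>k} a_{jk} g_j$: each $h_k$ depends on \emph{column} $k$ of $A$ (the entries below the diagonal), not on row $k$, and the right-hand side involves the $g_j$'s, not the $h_k$'s. In particular your claim ``rows $0,\dots,j-1$ have been defined, so $h_0,\dots,h_{j-1}$ are determined'' is false as stated: $h_k$ for $k<j$ still involves the undefined entries $a_{jk},a_{j+1,k},\dots$. What rescues the row-by-row induction is hypothesis~$(*)$: for an index $i$ with $j_i=j$, one has $(g_{j'})_{d,i}=0$ for all $j'>j$, so $(h_k)_{d,i}$ depends only on rows $\le j$. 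The paper exploits exactly this: at stage $j$ it handles only those $i$ with $j_i=j$, and for each such $i$ it sets
\[
a_{jk} := \min\Bigl\{\,0,\ -\bigl[(g_k)_{d,i}+\textstyle\sum_{k<j'<j} a_{j'k}(g_{j'})_{d,i}\bigr]\big/(g_j)_{d,i}\ :\ j_i=j\,\Bigr\},
\]
which forces $(h_k)_{d,i}\ge 0$ for $k<j$, while $(h_j)_{d,i}=(g_j)_{d,i}<0$ and $(h_k)_{d,i}=0$ for $k>j$ automatically. Your ``triangular linear system'' sketch does not capture this, because you are solving in the wrong variables.

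\textbf{Part (2).} You have misidentified the obstacle. The condition $h_k(0)\ge 0$ is not something you need to prove by induction: since $h_k = g_k + \sum_{j>k} a_{jk}g_j$, the hypothesis $g_k(0)+\sum_{j>k}a_{jk}g_j(0)\ge 0$ \emph{is} the statement $h_k(0)\ge 0$, verbatim. So $h_k(0)^+=h_k(0)$ is immediate, and the objective functions match. The real difficulty is elsewhere: when you invert $\lambda = A\mu$ to get $\mu = A^{-1}\lambda \ge 0$, you may have $\mu_k=0$, and program~(\ref{lw2}) requires $\mu_k>0$. Your suggested ``limiting argument'' is not free: perturbing $\mu$ upward to $\mu'>0$ changes $H(\mu')_{d,i}$, and since some $(h_j)_{d,i}$ are $\ge 0$, the constraint $\sum_\alpha z_{\alpha,i}\le H(\mu')_{d,i}$ may fail for the given $\mathbf{z}$. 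The paper handles this by perturbing on the $\lambda$ side, choosing $\lambda'$ with $\lambda'_j>\lambda_j$ close to $\lambda$ \emph{and} $G(\lambda')_{d,i}\ge G(\lambda)_{d,i}$ for every $i$; the latter condition is arranged by picking the increments $\lambda'_j-\lambda_j$ to grow geometrically in $j$ so that the pivot term $-(\lambda'_{j_i}-\lambda_{j_i})(g_{j_i})_{d,i}>0$ dominates $\sum_{j<j_i}(\lambda'_j-\lambda_j)|(g_j)_{d,i}|$, using $(*)$ once more. This perturbation step is the main technical content of the paper's proof of~(2), and your proposal does not address it.
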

\begin{proof}
Assume that ($*$) holds. We know that  $h_k = \sum_{j=0}^m a_{jk}g_j$, $k=0,\dots,m$. Choose $a_{jj}=1$ and $a_{jk}=0$ if $k>j$.
Thus $h_k = g_k+\sum_{j>k} a_{jk}g_j$. For each  $k<j$ define $a_{jk}$ by induction on $j$, as follows. For each $k<j$ and
each $i$ such that $j=j_i$, 
choose $a_{jk} \le 0$ as large as possible in absolute value so that
$(h_k)_{d,i} = (g_k)_{d,i}+\sum_{j\ge j'>k} a_{j'k}(g_{j'})_{d,i}\ge 0$, i.e.,
%
\[
	a_{jk} := \min_{i=1,\dots,n}\{ \displaystyle-[(g_k)_{d,i}+\sum_{j>j'>k} a_{j'k}(g_{j'})_{d,i}]/(g_j)_{d,i},0 :  j=j_i\}.
\]
Note that $a_{j'k}$ is already defined, by induction on $j$, for $j>j'>k$. By choice of $a_{jk}$, for each $i$,
if $j=j_i$, 
then $(h_k)_{d,i} =0$ for $k>j$, $(h_j)_{d,i} = (g_j)_{d,i} <0$,
and $(h_k)_{d,i}\ge 0$ for $k<j$. It follows that the hypothesis of Theorem \ref{change variables} holds.

Assume now that $\Delta(-g_j)=\emptyset$ for $j=1,\dots,m$, and $g_k(0)+\sum_{j>k}a_{jk}g_j(0)\ge 0$, for all $k=1,\dots,m$.
We want to show $f_{\operatorname{gp},\mathbf{g}}^A = s(f,\mathbf{g})$. In view of Theorem \ref{change variables} it suffices to
show $f_{\operatorname{gp},\mathbf{g}}^A \ge s(f,\mathbf{g})$. By our hypothesis, $h_k(0) = g_k(0)+\sum_{j>k}a_{jk}g_j(0)\ge 0$,
for all $k=1,\dots,m$, and $(g_j)_{\alpha} =0$ for all $\alpha \in \Delta$ and all $j=1,\dots,m$. Consequently,
$H(\mu)_{\alpha} = G(\lambda)_{\alpha} = f_{\alpha}$
and 
$\max\{ H(\mu)_{\alpha}^+, H(\mu)_{\alpha}^-\} = |f_{\alpha}|$,  for all $\alpha \in \Delta$. Let $\lambda \in [0,\infty)^m$
and let $\epsilon>0$ be given and let $\mathbf{z}$ be a feasible point of the the program
\begin{equation}\label{lw'}
\left\{
\begin{array}{llr}
	\textrm{Minimize} & \multicolumn{2}{c}{\sum\limits_{\alpha\in\Delta(f)^{<d}}(d-\vert\alpha\vert)
	 \left[\left(\frac{f_\alpha}{d}\right)^{d}\,\left(\frac{\alpha}{\mathbf{z}_\alpha}\right)^\alpha
	\right]^{1/(d-\vert\alpha\vert)}}\\
	\textrm{\mbox{s.t.}} & \sum\limits_{\alpha\in\Delta(f)}z_{\alpha,i} \le G(\lambda)_{d,i}, & i=1,\ldots,n\\
	& \left(\frac{\mathbf{z}_\alpha}{\alpha}\right)^\alpha =\left(\frac{f_\alpha}{d}\right)^{d}, & \alpha\in\Delta(f)^{=d}
\end{array}\right.
\end{equation}
Choose $\lambda' \in [0,\infty)^m$ so that $\lambda_j'>\lambda_j$, $j=1,\dots,m$ and $G(\lambda)_{d,i}\le G(\lambda')_{d,i}$,
$i=1,\dots,n$ with $\lambda'$ so close to $\lambda$ that $|\sum_{j=1}^m (\lambda_j'-\lambda_j)g_j(0)|<\epsilon$.
Existence of $\lambda'$ is a consequence of ($*$). For each $i=1,\dots,n$ there exists $0\le k(=j_i) \le m$ such that
$(g_k)_{d,i}<0$ and $(g_j)_{d,i}=0$ for $j>k$, so
\[
\begin{array}{lcl}
	G(\lambda')_{d,i}-G(\lambda)_{d,i} & = & \displaystyle-\sum_{j=0}^m (\lambda_j'-\lambda_j)(g_j)_{d,i}\\
	 & = & \displaystyle-\sum_{j=0}^k (\lambda_j'-\lambda_j)(g_j)_{d,i}\\
	 & \ge & \displaystyle-(\lambda_k'-\lambda_k) (g_k)_{d,i}- \sum_{j=0}^{k-1}(\lambda_j'-\lambda_j)|(g_j)_{d,i}|.
\end{array}
\]
Thus we can choose $\lambda_1',\dots,\lambda_m'$ so that $\lambda_j'>\lambda_j$,
$j=1,\dots,m$, $|\sum_{j=1}^m (\lambda_j'-\lambda_j)g_j(0)|<\epsilon$, and so that if $k=j_i$,
then $(\lambda_j'-\lambda_j)|(g_j)_{d,i}| \le -\frac{1}{2^{k-j}}(\lambda_k'-\lambda_k)(g_k)_{d,i}$, for $j<k$,
so $-(\lambda_k'-\lambda_k)(g_k)_{d,i}\ge \sum_{j=0}^{k-1}(\lambda_j'-\lambda_j)|(g_j)_{d,i}|$.
Choose $\mu' \in [0,\infty)^m$ so that $\lambda_j' = \sum_{k\le j} a_{kj}\mu_k'$, $j=0,\dots,m$, $\lambda_0'=\mu_0'= 1$.
Using $a_{jk}\le 0$ for $j>k$ and $\lambda_j'>0$ one sees that $\mu_j'>0$, $j=1,\dots,m$.
Finally, let $\mathbf{w}_{\alpha} = |f_{\alpha}|$ for all $\alpha \in \Delta$. One checks that $(\mathbf{z},\mathbf{w},\mu')$
is a feasible point for program (\ref{lw2}) and
\[
\begin{array}{lcl}
	H(\mu')(0) & - & \sum\limits_{\alpha\in\Delta(f)^{<d}}(d-\vert\alpha\vert)\left[
	 \left(\frac{\mathbf{w}_\alpha}{d}\right)^{d}\,\left(\frac{\alpha}{\mathbf{z}_\alpha}\right)^\alpha
	\right]^{1/(d-\vert\alpha\vert)} \\
	 & = & G(\lambda')(0)-\sum\limits_{\alpha\in\Delta(f)^{<d}}(d-\vert\alpha\vert)\left[
	 \left(\frac{f_\alpha}{d}\right)^{d}\,\left(\frac{\alpha}{\mathbf{z}_\alpha}\right)^\alpha
	\right]^{1/(d-\vert\alpha\vert)}\\
	 & \ge & G(\lambda)(0)-\sum\limits_{\alpha\in\Delta(f)^{<d}}(d-\vert\alpha\vert)\left[
	 \left(\frac{f_\alpha}{d}\right)^{d}\,\left(\frac{\alpha}{\mathbf{z}_\alpha}\right)^\alpha
	\right]^{1/(d-\vert\alpha\vert)} - \epsilon.
\end{array}
\]
It follows that $f_{\operatorname{gp},\mathbf{g}}^A\ge G(\lambda)_{\operatorname{gp}}-\epsilon$.
\end{proof}

\begin{rem} \label{extensions and clarifications} \

(1) If ($*$) holds, $\Delta(-g_j)=\emptyset$ for $j=1,\dots,m$, and the matrix $A$ is chosen as in
Theorem \ref{important special case}, then $H(\mu)_{\alpha}=f_{\alpha}$, for all $\alpha \in \Delta$, and program
(\ref{lw2}) reduces to the following one:
\begin{equation}
\label{lw3}
\left\{\begin{array}{lll}
	\textrm{Minimize} & \multicolumn{2}{c}{\sum\limits_{j=1}^m \mu_jh_j(0)^++ \sum\limits_{\alpha\in\Delta^{<d}}(d-\vert\alpha\vert)\left[
	 \left(\frac{f_\alpha}{d}\right)^{d}\,\left(\frac{\alpha}{\mathbf{z}_\alpha}\right)^\alpha
	\right]^{1/(d-\vert\alpha\vert)}}\\
	\textrm{\mbox{s.t.}} & \sum\limits_{\alpha\in\Delta}z_{\alpha,i} \le H(\mu)_{d,i}, & i=1,\ldots,n\\
	& \left(\frac{\mathbf{z}_\alpha}{\alpha}\right)^\alpha = \left(\frac{f_\alpha}{d}\right)^{d}, & \alpha\in\Delta^{=d}\\
	& \sum\limits_{k=0}^m a_{jk}\mu_k \ge 0, & j=1,\dots,m
\end{array}\right.
\end{equation}
where, for every $\alpha\in\Delta$, the unknowns $\mathbf{z}_\alpha = (z_{\alpha,i})\in[0,\infty)^n$ satisfy $z_{\alpha,i}= 0$
if and only if $\alpha_i=0$, and the unknowns $\mu = (\mu_1,\dots,\mu_m)$ satisfy $\mu_k> 0$.

(2) If condition ($*$) is replaced by the stronger condition ($**$): for each $i=1,\dots,n$ there exists $k$ such that
$(g_k)_{d,i}<0$, $(g_j)_{d,i} =0$ for $j>k$ and $(g_j)_{d,i}\ge 0$ for $0<j<k$, then $a_{jk}=0$ for all $j>k>0$,
$h_k(0)=g_k(0)+\sum_{j>k}a_{jk}g_j(0) = g_k(0)$, and the condition $g_k(0)+\sum_{j>k}a_{jk}g_j(0)\ge 0$ reduces to
$g_k(0)\ge 0$, $k=1,\dots,m$.

(3) If $m=1$ and $g_1= M-(x_1^d+\dots+x_n^d)$, then parts (1) and (2) of Theorem \ref{important special case} apply,
yielding a lower bound for $f$ on the hyperellipsoid $B_M =\{ \mathbf{x} \in \mathbb{R}^n : x_1^{d}+\dots+x_n^{d}\le M\}$,
computable by geometric programming. Note that, in this example,
\[
	A =\begin{pmatrix} 1&0\\-c&1\end{pmatrix}, \text{ where } c = \max\{ 0, f_{d,i} : i=1,\dots,n\}.
\]
Note also that, in this example, the program (\ref{lw3}) for computing $f_{\rm{\operatorname{gp}},\mathbf{g}}^A$ is not
the same as the program in \cite[Theorem 2.4]{gha-lass-mar}, but it is equivalent, i.e., it produces the same output.
In fact, because fewer variables and constraints are involved, the program (\ref{lw3}) is faster than the one in
\cite{gha-lass-mar}. These facts were also verified experimentally, by redoing Examples 4.1--4.5 of \cite{gha-lass-mar}
using program (\ref{lw3}) in place of the program used in \cite{gha-lass-mar}.

(4) Fix $N_i>0$, $i=1,\dots,n$. If $m=n$, $g_i= N_i^d-x_i^d$, $i=1,\dots,n$, then parts (1) and (2) of Theorem
\ref{important special case} apply. This gives a lower bound for $f$ on the hypercube $\prod_{i=1}^n [-N_i,N_i]$.
Observe that, in this example, the stronger condition ($**$) holds, and
\[
	A = \begin{pmatrix} 1 & 0&\dots&0&0\\ -c_1 & 1&\dots&0&0\\&&\dots&&\\ -c_n&0&\dots&0&1\end{pmatrix} \text{ where } c_j = \max\{0,f_{d,j}\}, \ j=1,\dots,n
\]

(5) In computing a lower bound for $f$ on $\prod_{j=1}^n [-N_j,N_j]$ using the method described in (4) one can take $d$
to be any even integer $\ge 2\lceil \frac{\deg(f)}{2}\rceil$. We will show later, in Section 5, that if $d >\deg f$, more
generally, if $d\ge \deg f$ and $f_{d,i}\le 0$, $i=1,\dots,n$, then the lower bound obtained in this way coincides with the trivial lower bound;
see Theorem \ref{hypercube case}.
%

(6) Fix $N_i >0$, $i=1,\dots,n$ and a partition $I_1,\dots,I_m$ of $\{ 1,\dots,n\}$, i.e., $I_1,\dots,I_m$ are nonempty
pairwise disjoint subsets of $\{1,\dots,n\}$ whose union is all of $\{1,\dots,n\}$. Set $\mathbf{g} = (g_1,\dots,g_m)$ where
$g_j:= 1- \sum_{i\in I_j} (x_i/N_i)^d$, $j=1,\dots,m$.
In this situation, condition ($**$) holds and Theorem \ref{important special case} (1) and (2) apply to produce lower bound
for $f$ on $K_{\mathbf{g}}$, where $K_{\mathbf{g}}$ is the product of hyperellipsoids defined by
\[
	K_{\mathbf{g}} := \{ \mathbf{x} \in \mathbb{R}^n : \sum_{i\in I_j}(x_i/N_i)^d\le 1, j=1,\dots,m\}.
\]

(7) Examples (3) and (4) can be seen as special cases of (6): If each $I_j$ is singleton, (6) produces the lower bound for $f$
on $\prod_{i=1}^n [-N_i,N_i]$ described in (4). If there is just one $I_j$, i.e., $m=1$ and $I_1 = \{ 1,\dots,n\}$, and if
$N_i= \root d \of{M}$, $i=1,\dots,n$, then (6) produces to the lower bound for $f$ on $B_M$ described in (3).

(8) Table \ref{RunTimeTable} records average running time for computation of $s(f,\mathbf{g})$ for large examples (where computation of $f_{\operatorname{sos},\mathbf{g}}^{(d)}$ is not possible). Here $\textbf{g} = (g_1,\dots,g_m)$, $g_j = 1 - \sum_{i\in I_j} x_i^d$, $j=1,\dots,m$, where $\{I_1,\dots,I_m\}$ is a partition of $\{1,\dots,n\}$. The average is taken over 10 randomly chosen partitions $\{I_1,\dots,I_m\}$ and 
polynomials $f$, each $f$ having $t$ terms and $\deg(f)\le d$ with coefficients chosen from $[-10,10]$. See also \cite[Table 2]{gha-lass-mar} and \cite[Table 3]{gha-mar2}.  

%
\begin{table}[ht]{
\caption{Average runtime for computation of $s(f,\mathbf{g})$ (seconds) for various $n, d$ and $t$.}\label{RunTimeTable}
\centering
\begin{tabular}{|c|c|cccc|}
\hline
$n$ & $d\backslash t$ & 50 & 100 & 150 & 200 \\
\hline
\multirow{3}{*}{10} & 20 & 3.330 & 23.761 & 79.369 & 170.521 \\
				    & 40 & 5.730 & 43.594 & 159.282 & 421.497 \\
				    & 60 & 6.524 & 68.625 & 191.126 & 531.442 \\
\hline
\multirow{3}{*}{20} & 20 & 8.364 & 63.198 & 193.431 & 562.243 \\
				    & 40 & 16.353 & 149.137 & 473.805 & 1102.579 \\
				    & 60 & 31.774 & 304.065 & 782.967 & 1184.263 \\
\hline
\multirow{3}{*}{30} & 20 & 12.746 & 107.285 & 353.803 & 776.831 \\
				    & 40 & 46.592 & 310.228 & 753.356 & 1452.159 \\
				    & 60 & 58.838 & 539.738 & 1271.102 & 1134.887 \\
\hline
\multirow{3}{*}{40} & 20 & 15.995 & 148.827 & 423.117 & 989.318 \\
				    & 40 & 60.861 & 414.188 & 1493.461 & 1423.965 \\
				    & 60 & 95.384 & 784.039 & 1305.201 & 1093.932 \\
\hline

\end{tabular}}
\end{table}

 (9) Table \ref{runtimecmp} records average running time for computation of $s(f,\mathbf{g})$ (the top number) and $f^{(d)}_{\operatorname{sos},\mathbf{g}}$ (the bottom number) for small examples. Here $\textbf{g} = (g_1,\dots,g_m)$, $g_j = 1 - \sum_{i\in I_j} x_i^d$, $j=1,\dots,m$, where $\{I_1,\dots,I_m\}$ is a partition of $\{1,\dots,n\}$. The average is taken over 10 randomly chosen partitions $\{I_1,\dots,I_m\}$ and 
polynomials $f$, each $f$ having $t$ terms and $\deg(f)\le d$ with coefficients chosen from $[-10,10]$.\footnote{\textbf{Hardware and Software
specifications.} Processor: Intel\textregistered~ Core\texttrademark2 Duo CPU P8400 @ 2.26GHz, Memory: 3 GB, OS: \textsc{Ubuntu}
14.04-32 bit, \textsc{Sage}-6.0} See also \cite[Table 1]{gha-lass-mar} and \cite[Table 2]{gha-mar2}.

\begin{table}[ht]{\footnotesize 
\caption{Average runtime for computation of $s(f,\mathbf{g})$ and $f^{(d)}_{\operatorname{sos},\mathbf{g}}$
for various $n, d$ and $t$.}\label{runtimecmp}

\centering
\begin{tabular}{|c|c|cccccccccc|}
\hline
$n$ & $d\backslash t$ & 10 & 30 & 50 & 100 & 150 & 200 & 250 & 300 & 350 & 400 \\
\hline
\multirow{6}{*}{3} & \multirow{2}{*}{4} & 0.034 & 0.092 &  &  &  &  &  &  &  & \\
					&				   & 0.026 & 0.026 &  &  &  &  &  &  &  & \\
				  & \multirow{2}{*}{6} & 0.041 & 0.105 &  &  &  &  &  &  &  & \\
				    	&				   & 0.108 & 0.103 &  &  &  &  &  &  &  & \\
				  & \multirow{2}{*}{8} & 0.045 & 0.164 &  &  &  &  &  &  &  & \\
				  	&				   & 0.522 & 0.510 &  &  &  &  &  &  &  & \\
\hline
\multirow{6}{*}{4} & \multirow{2}{*}{4} & 0.042 & 0.116 &  &  &  &  &  &  &  & \\
					&				   & 0.045 & 0.054 &  &  &  &  &  &  &  & \\
				  & \multirow{2}{*}{6} & 0.043 & 0.133 & 0.285 &  &  &  &  &  &  & \\
				    	&				   & 0.662 & 0.632 & 0.624 &  &  &  &  &  &  & \\
				  & \multirow{2}{*}{8} & 0.053 & 0.191 & 0.446 & 2.479 & 7.089 &  &  &  &  & \\
				  	&				   & 12.045 & 13.019 & 11.956 & 12.091 & 12.307 &  &  &  &  & \\
\hline
\multirow{6}{*}{5} & \multirow{2}{*}{4} & 0.039 & 0.124 & 0.295 &  &  &  &  &  &  & \\
					&				   & 0.181 & 0.154 & 0.127 &  &  &  &  &  &  & \\
				  & \multirow{2}{*}{6} & 0.052 & 0.156 & 0.396 & 1.677 & 5.864 & 13.814 &  &  &  & \\
				    	&				   & 6.429 & 6.530 & 6.232 & 6.425 & 6.237 & 6.469 &  &  &  & \\
				  & \multirow{2}{*}{8} & 0.056 & 0.219 & 0.528 & 3.046 & 7.663 & 23.767 & 44.699 & 88.104 & 123.986 & 179.126\\
				  	&				   & 340.321 & 243.860 & 225.746 & 205.621 & 222.619 & 220.887 & 224.018 & 218.994 & 219.085 & 213.348\\
\hline
\multirow{6}{*}{6} & \multirow{2}{*}{4} & 0.043 & 0.140 & 0.340 & 1.545 &  &  &  &  &  & \\
					&				   & 0.453 & 0.422 & 0.422 & 0.448 &  &  &  &  &  & \\
				  & \multirow{2}{*}{6} & 0.053 & 0.194 & 0.429 & 2.079 & 7.138 & 16.212 & 34.464 & 71.465 & 102.342 & 166.345 \\
				    	&				   & 48.239 & 47.752 & 47.776 & 51.268 & 51.008 & 48.012 & 49.908 & 53.135 & 51.542 & 52.874 \\
				  & \multirow{2}{*}{8} & 0.066 & 0.251 & 0.681 & 3.389 & 11.985 & 36.495 & 74.088 & 148.113 & 174.163 & 269.544 \\
				    	&				   & -- & -- & -- & -- & -- & -- & -- & -- & -- & -- \\
\hline
\end{tabular}}
\end{table}

(10) Table \ref{PartitionTable} computes average values for the relative error
\[
	R=  \frac{-s(-f,\mathbf{g})-s(f,\mathbf{g})}{-(-f)_{\operatorname{sos},\mathbf{g}}^{(d)}-f_{\operatorname{sos},\mathbf{g}}^{(d)}}
\]
for small examples. Here $\textbf{g} = (g_1,\dots,g_m)$, $g_j = 1 - \sum_{i\in I_j} x_i^d$, $j=1,\dots,m$, where $\{I_1,\dots,I_m\}$ is a  partition of $\{1,\dots,n\}$. The average is taken over 20 randomly chosen partitions $\{I_1,\dots,I_m\}$ and  polynomials $f$, each $f$ having $t$ terms and $\deg(f)\le d$ with coefficients chosen from $[-10,10]$. Table \ref{PartitionTable} would seem to confirm that for fixed $n,d$
the quality of the bound $s(f,\mathbf{g})$ is best when $t$ is small, and for fixed $d,t$ the quality of the bound $s(f,\mathbf{g})$
is best when $n$ is large.  Comparison of Table \ref{PartitionTable} with \cite[Table 3]{gha-lass-mar} would seem to indicate
that the quality of the bound $s(f,\mathbf{g})$ is best when $m=1$.
%
\begin{table}[ht]{\footnotesize
\caption{Average values of $R$ for various $n, d$ and $t$.}\label{PartitionTable}
\centering
\begin{tabular}{|c|c|ccccccccc|}
\hline
$n$ & $d\backslash t$ & 5 & 10 & 50 & 100 & 150 & 200 & 250 & 300 & 400 \\
\hline
\multirow{3}{*}{3} & 4 & 1.3688 & 1.6630 & & & & & & & \\
				   & 6 & 1.5883 & 2.0500 & 4.3726 & & & & & & \\
				   & 8 & 2.0848 & 2.7636 & 5.6391 & 5.8140 & 6.9135 & & & & \\
\hline
\multirow{3}{*}{4} & 4 & 1.2183 & 1.3420 & 3.3995 & & & & & & \\
				   & 6 & 1.3816 & 2.9584 & 3.1116 & 4.6891 & 5.8067 & 6.5150 & & & \\
				   & 8 & 1.7630 & 2.2038 & 3.2685 & 4.4219 & 5.7929 & 7.0841 & 7.9489 & 8.7924 & 9.6068 \\
\hline
\multirow{3}{*}{5} & 4 & 1.2566 & 1.6701 & 3.1867 & 4.6035 & & & & & \\
				   & 6 & 2.3807 & 2.9424 & 3.3077 & 4.7939 & 5.6523 & 7.1996 & 8.6194 & 9.3317 & 10.134\\
				   & 8 & 1.5557 & 1.9754 & 2.5204 & 3.9815 & 4.5404 & 5.1756 & 6.2214 & 6.6919 & 7.8921 \\
\hline
\multirow{3}{*}{6} & 4 & 1.2069 & 1.3876 & 3.0639 & 4.5326 & 4.9645 & 6.1414 & & & \\
				   & 6 & 1.2602 & 1.4854 & 2.8236 & 4.0256 & 4.5797 & 6.3479 & 6.9487 & 7.4866 & 8.8435 \\
				   & 8 & 1.0478 & 1.1616 & 2.4884 & 3.3896 & 4.0870 & 5.0809 & 5.6932 & 6.1994 & 10.567 \\
\hline
\multirow{2}{*}{7} & 4 & 1.1943 & 1.3360 & 2.8592 & 4.5334 & 5.5064 & 5.9837 & 7.4782 & 7.3568 & \\
				   & 6 & 1.2604 & 1.4529 & 2.6962 & 3.8035 & 4.5351 & 6.0305 & 6.1478 & 6.6746 & 9.5049 \\
\hline
\multirow{2}{*}{8} & 4 & 1.1699 & 1.4274 & 2.5942 & 4.0914 & 5.4111 & 6.2111 & 7.4479 & 8.3168 & 9.1369 \\
				   & 6 & 1.0454 & 1.1270 & 2.1316 & 3.2807 & 3.5468 & 4.8955 & 5.1211 & 5.4214 & 7.1872 \\
\hline
\multirow{1}{*}{9} & 4 & 1.2158 & 1.3214 & 2.9305 & 4.0624 & 5.9063 & 6.5985 & 7.9552 & 8.4233 & 11.810 \\
\hline
\multirow{1}{*}{10} & 4 & 1.1476 & 1.3441 & 2.2393 & 3.7136 & 5.7739 & 6.1791 & 6.3211 & 8.7773 & 10.428 \\
\hline
\end{tabular}}
\end{table}
\end{rem}

\section{Variants of Theorem \ref{important special case}}

We explain how the hypothesis of Theorem \ref{important special case} can be weakened a bit when $m=1$.
\begin{thm}\label{m=1}
Suppose $m=1$ and $(g_1)_{d,i}=0$ $\Rightarrow$ $f_{d,i}>0$ for $i=1,\dots,n$. Choose $c$ so that
$(g_1)_{d,i}<0$ $\Rightarrow$ $c\ge -\frac{f_{d,i}}{(g_1)_{d,i}}$ and  $(g_1)_{d,i}>0$ $\Rightarrow$ $c> -\frac{f_{d,i}}{(g_1)_{d,i}}$,
for each $i=1,\dots,n$. Then (1) the hypothesis of Theorem \ref{change variables} holds for $A:= \begin{pmatrix} 1&0\\-c&1\end{pmatrix},$
and (2) if, in addition, $\Delta(-g_1)=\emptyset$, $g_1(0)\ge 0$, and $c>0$, then $f_{\operatorname{gp},\mathbf{g}}^A = s(f,\mathbf{g})$.
\end{thm}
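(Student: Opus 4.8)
The plan is to prove part (1) by checking directly the two hypotheses of Theorem \ref{change variables} for $A=\begin{pmatrix}1&0\\-c&1\end{pmatrix}$, and to prove part (2) by the device used in the proof of Theorem \ref{important special case} (2), but in streamlined form: under the extra hypotheses of part (2) the substitution $\mu_1=\lambda_1+c$ maps $[0,\infty)$ into the strictly feasible region $\mu_1>0$ while leaving the degree-$d$ data unchanged, so the perturbation of $\lambda$ used in that proof can be dispensed with. Throughout, $h_0=g_0-cg_1=-f-cg_1$ and $h_1=g_1$, so $-h_0(0)=f(0)+cg_1(0)$.

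For part (1), the first hypothesis of Theorem \ref{change variables} involves only the row $(a_{10},a_{11})=(-c,1)$: if $c>0$ exactly one of its entries is strictly positive, and if $c\le 0$ both are non-negative, so one of the two alternatives always holds. For the second hypothesis, fix $i$ and use $(h_1)_{d,i}=(g_1)_{d,i}$ and $(h_0)_{d,i}=-f_{d,i}-c(g_1)_{d,i}$, splitting on the sign of $(g_1)_{d,i}$: if $(g_1)_{d,i}<0$ then $(h_1)_{d,i}<0$, and $c\ge-f_{d,i}/(g_1)_{d,i}$ forces $(h_0)_{d,i}\ge 0$; if $(g_1)_{d,i}>0$ then $(h_1)_{d,i}>0$, and $c>-f_{d,i}/(g_1)_{d,i}$ forces $(h_0)_{d,i}<0$; if $(g_1)_{d,i}=0$ then $(h_1)_{d,i}=0$ and $(h_0)_{d,i}=-f_{d,i}<0$ by the standing hypothesis $f_{d,i}>0$. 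In every case exactly one of $(h_0)_{d,i},(h_1)_{d,i}$ is strictly negative, so Theorem \ref{change variables} applies.

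For part (2), Theorem \ref{change variables} already gives $f_{\operatorname{gp},\mathbf{g}}^A\le s(f,\mathbf{g})$, so it suffices to show $f_{\operatorname{gp},\mathbf{g}}^A\ge G(\lambda)_{\operatorname{gp}}$ for each $\lambda=\lambda_1\in[0,\infty)$. The hypothesis $\Delta(-g_1)=\emptyset$ gives $\Delta=\Delta(f)=\Delta(G(\lambda))$ with $G(\lambda)_\alpha=f_\alpha$ for $\alpha\in\Delta(f)$, hence $H(\mu)_\alpha=f_\alpha$ and $\max\{H(\mu)_\alpha^+,H(\mu)_\alpha^-\}=|f_\alpha|$ for $\alpha\in\Delta$; thus (\ref{lw2}) reduces to (\ref{lw3}), and (\ref{lw'}) is exactly program (\ref{lw}) of Theorem \ref{thm1} applied to $G(\lambda)$. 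Fix $\lambda_1\ge 0$ (if the feasible set of (\ref{lw'}) is empty then $G(\lambda)_{\operatorname{gp}}=-\infty$ and there is nothing to prove), let $\mathbf{z}$ be a feasible point of (\ref{lw'}) with objective value $\sigma$, and set $\mu_1:=\lambda_1+c$ (so $\mu_1>0$ because $c>0$, and $\mu_1-c=\lambda_1\ge 0$) and $w_\alpha:=|f_\alpha|$ for $\alpha\in\Delta$. The point is that $(\mathbf{z},\mathbf{w},\mu)$ is feasible for (\ref{lw2}): the crucial identity is $H(\mu)_{d,i}=f_{d,i}-\lambda_1(g_1)_{d,i}=G(\lambda)_{d,i}$, which makes the constraint $\sum_\alpha z_{\alpha,i}\le H(\mu)_{d,i}$ follow from feasibility of $\mathbf{z}$; the constraint $\sum_k a_{1k}\mu_k=\mu_1-c=\lambda_1\ge 0$, the degree-$d$ constraints (which hold with equality since $d$ is even, so $(w_\alpha/d)^d=(f_\alpha/d)^d$), and $w_\alpha\ge|f_\alpha|=\max\{H(\mu)_\alpha^+,H(\mu)_\alpha^-\}$ are then immediate. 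Since $h_1(0)^+=g_1(0)$, the objective of (\ref{lw2}) at $(\mathbf{z},\mathbf{w},\mu)$ equals $\mu_1 g_1(0)+\sigma$, so, using $-h_0(0)=f(0)+cg_1(0)$, Theorem \ref{change variables} yields
\[
f_{\operatorname{gp},\mathbf{g}}^A\ \ge\ \bigl(f(0)+cg_1(0)\bigr)-\mu_1 g_1(0)-\sigma = f(0)-\lambda_1 g_1(0)-\sigma = G(\lambda)(0)-\sigma .
\]
Taking the infimum over all feasible $\mathbf{z}$ (so $\sigma\to\rho(G(\lambda))$) gives $f_{\operatorname{gp},\mathbf{g}}^A\ge G(\lambda)(0)-\rho(G(\lambda))=G(\lambda)_{\operatorname{gp}}$, and the supremum over $\lambda_1\in[0,\infty)$ gives $f_{\operatorname{gp},\mathbf{g}}^A\ge s(f,\mathbf{g})$, hence equality.

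I expect the only delicate point to be the feasibility and objective bookkeeping in part (2): verifying that $\mu_1=\lambda_1+c$ collapses $H(\mu)_{d,i}$ to $G(\lambda)_{d,i}$, that evenness of $d$ makes all $w_\alpha$-constraints hold with equality and matches the objective of (\ref{lw'}), and that the objective of (\ref{lw2}) at $(\mathbf{z},\mathbf{w},\mu)$ is exactly $\mu_1 g_1(0)+\sigma$. There is no genuine conceptual obstacle; the three extra hypotheses of part (2) are present precisely to license this direct substitution, which replaces the perturbation argument of Theorem \ref{important special case} (2).
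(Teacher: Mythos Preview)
Your proof is correct and follows essentially the same route as the paper's: part (1) is verified by the identical case split on the sign of $(g_1)_{d,i}$, and part (2) uses the same substitution $\mu_1=\lambda_1+c$, $w_\alpha=|f_\alpha|$ to carry a feasible point of (\ref{lw'}) to one of (\ref{lw2}) with matching objective value. You have simply written out in full the bookkeeping that the paper summarizes in one displayed equation.
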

\begin{proof}
Since $A = \begin{pmatrix} 1&0\\-c&1\end{pmatrix}$, $\lambda_0=\mu_0 = 1$, $\lambda_1=-c+\mu_1$, and
$G(\lambda) = f-\lambda_1g_1= f-(-c+\mu_1)g_1 = (f+cg_1)-\mu_1g_1$, so $H(\mu)= -h_0-\mu_1 h_1$ where $h_0 = -(f+cg_1)$
and $h_1 = g_1$. The hypothesis of Theorem \ref{change variables} is that exactly one of $(h_0)_{d,i}, (h_1)_{d,i}$ is
strictly negative for each $i=1,\dots,n$. Since $(h_0)_{d,i} = -(f_{d,i}+c(g_1)_{d,i})$ and $(h_1)_{d,i}=(g_1)_{d,i}$ the
proof of (1) is completely straightforward. The proof of (2) is similar to the proof of Theorem \ref{important special case}
(2), but it is a good deal simpler: If $\lambda_1 \in [0,\infty)$ and $\mathbf{z}$ is a feasible point of (\ref{lw'}) then
$(\mathbf{z},\mathbf{w},\mu_1)$, where $w_{\alpha}=|f_{\alpha}|$ and $\mu_1 = c+\lambda_1$, is a feasible point of (\ref{lw2}),
and
\begin{align*}
&H(\mu_1)(0)- \sum\limits_{\alpha\in\Delta(f)^{<d}}(d-\vert\alpha\vert)\left[
\left(\frac{\mathbf{w}_\alpha}{d}\right)^{d}\,\left(\frac{\alpha}{\mathbf{z}_\alpha}\right)^\alpha
\right]^{1/(d-\vert\alpha\vert)} \\
&= G(\lambda_1)(0)-\sum\limits_{\alpha\in\Delta(f)^{<d}}(d-\vert\alpha\vert)\left[
\left(\frac{f_\alpha}{d}\right)^{d}\,\left(\frac{\alpha}{\mathbf{z}_\alpha}\right)^\alpha
\right]^{1/(d-\vert\alpha\vert)}.
\end{align*}
It follows that $f_{\operatorname{gp},\mathbf{g}}^A\ge G(\lambda)_{\operatorname{gp}}$.
\end{proof}
Similarly, the hypothesis of Theorem \ref{important special case} can be weakened when $m=2$ and $f_{d,i}=0$ for $i=1,\dots,n$.
\begin{thm}\label{m=2}
Suppose $m=2$, $f_{d,i}=0$ for $i=1,\dots,n$ and $(g_2)_{d,i}=0$ $\Rightarrow$ $(g_1)_{d,i} <0$ for $i=1,\dots,n$.
Choose $c$ so that $(g_2)_{d,i}<0$ $\Rightarrow$ $c\ge \frac{(g_1)_{d,i}}{(g_2)_{d,i}}$ and  $(g_2)_{d,i}>0$ $\Rightarrow$
$c> \frac{(g_1)_{d,i}}{(g_2)_{d,i}}$, for each $i=1,\dots,n$. Then (1) the hypothesis of Theorem \ref{change variables}
holds for $A:= \begin{pmatrix} 1&0&0\\0&1&0\\0&-c&1\end{pmatrix},$ and (2) if, in addition, $\Delta(-g_j)=\emptyset$ for
$j=1,2$, $g_k(0)\ge 0$, for $k=1,2$, and $c>0$, then
$f_{\operatorname{gp},\mathbf{g}}^A \ge \sup\{ G(\lambda)_{\operatorname{gp}} : \lambda_1>0, \lambda_2\ge 0\}$.
\end{thm}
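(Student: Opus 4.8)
The plan is to follow the two‑part template of the proofs of Theorems~\ref{important special case} and~\ref{m=1}. For part~(1) I would first record that, for the given $A$, $h_0=g_0=-f$, $h_1=g_1-cg_2$, $h_2=g_2$, while $\mu_0=1$, $\lambda_0=1$, $\lambda_1=\mu_1$ and $\lambda_2=-c\mu_1+\mu_2$, so that $H(\mu)=G(\lambda)$. Since $f_{d,i}=0$ one has $(h_0)_{d,i}=-f_{d,i}=0$, so the condition of Theorem~\ref{change variables} that exactly one of $(h_0)_{d,i},(h_1)_{d,i},(h_2)_{d,i}$ be strictly negative reduces to asking that exactly one of $(h_1)_{d,i}=(g_1)_{d,i}-c(g_2)_{d,i}$ and $(h_2)_{d,i}=(g_2)_{d,i}$ be strictly negative, the other being $\ge 0$. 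I would check this by cases on the sign of $(g_2)_{d,i}$: if $(g_2)_{d,i}<0$ then $(h_2)_{d,i}<0$ while $c\ge(g_1)_{d,i}/(g_2)_{d,i}$ forces $(h_1)_{d,i}\ge 0$; if $(g_2)_{d,i}=0$ then $(h_2)_{d,i}=0$ and the hypothesis $(g_1)_{d,i}<0$ gives $(h_1)_{d,i}<0$; if $(g_2)_{d,i}>0$ then $(h_2)_{d,i}>0$ and $c>(g_1)_{d,i}/(g_2)_{d,i}$ gives $(h_1)_{d,i}<0$. The remaining (row) condition of Theorem~\ref{change variables} is immediate: the $j=1$ row of $A$ is $(0,1,0)$, all nonnegative, and the $j=2$ row is $(0,-c,1)$, which has exactly one strictly positive entry if $c>0$ and is all nonnegative if $c\le 0$.

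For part~(2) I would adapt the proof of Theorem~\ref{important special case}(2). Since $\Delta(-g_1)=\Delta(-g_2)=\emptyset$ we get $\Delta=\Delta(f)$ and $(g_1)_\alpha=(g_2)_\alpha=0$ for all $\alpha\in\Delta$, hence $H(\mu)_\alpha=G(\lambda)_\alpha=f_\alpha$ and $\max\{H(\mu)_\alpha^+,H(\mu)_\alpha^-\}=|f_\alpha|$ for every $\alpha\in\Delta$. Fix $\lambda$ with $\lambda_1>0$, $\lambda_2\ge 0$; we may assume $G(\lambda)_{\operatorname{gp}}>-\infty$, i.e.\ that program~(\ref{lw'}) written for this $\lambda$ is feasible. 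Given $\epsilon>0$, let $\mathbf{z}$ be an $\epsilon$-optimal feasible point of~(\ref{lw'}), set $w_\alpha=|f_\alpha|$ for $\alpha\in\Delta$, and put $\mu_1=\lambda_1>0$, $\mu_2=\lambda_2+c\lambda_1>0$ (positive since $c>0$). One checks that $(\mathbf{z},\mathbf{w},\mu)$ is feasible for~(\ref{lw2}): $H(\mu)_{d,i}=G(\lambda)_{d,i}\ge\sum_\alpha z_{\alpha,i}$; the constraint $(\mathbf{z}_\alpha/\alpha)^\alpha\ge(w_\alpha/d)^d$ holds with equality because $d$ is even and $\mathbf{z}$ satisfies~(\ref{lw'}); $w_\alpha=|f_\alpha|=\max\{H(\mu)_\alpha^+,H(\mu)_\alpha^-\}$; and $\sum_{k=0}^2 a_{1k}\mu_k=\mu_1=\lambda_1\ge 0$, $\sum_{k=0}^2 a_{2k}\mu_k=-c\mu_1+\mu_2=\lambda_2\ge 0$.

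The crux is then the identity
\[
	\mu_1h_1(0)^+ + \mu_2h_2(0)^+ = \lambda_1g_1(0) + \lambda_2g_2(0),
\]
which I would obtain from $h_2(0)^+=h_2(0)=g_2(0)$ and $h_1(0)^+=h_1(0)=g_1(0)-cg_2(0)$ --- valid because the constant-term hypotheses make $h_1(0),h_2(0)\ge 0$ --- together with $\mu_1=\lambda_1$, $\mu_2=\lambda_2+c\lambda_1$. Granting this, the objective of~(\ref{lw2}) at $(\mathbf{z},\mathbf{w},\mu)$ equals $\lambda_1g_1(0)+\lambda_2g_2(0)$ plus the objective of~(\ref{lw'}) at $\mathbf{z}$, hence is at most $\lambda_1g_1(0)+\lambda_2g_2(0)+\rho(G(\lambda))+\epsilon$. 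Since $h_0(0)=-f(0)$, $f_{\operatorname{gp},\mathbf{g}}^A=-h_0(0)-\rho$, and $\rho$ does not exceed the objective at any feasible point, this gives $f_{\operatorname{gp},\mathbf{g}}^A\ge f(0)-\lambda_1g_1(0)-\lambda_2g_2(0)-\rho(G(\lambda))-\epsilon=G(\lambda)(0)-\rho(G(\lambda))-\epsilon=G(\lambda)_{\operatorname{gp}}-\epsilon$. Letting $\epsilon\downarrow 0$ and then taking the supremum over all $\lambda$ with $\lambda_1>0$, $\lambda_2\ge 0$ yields the claim.

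The main obstacle is the displayed identity. The feasibility check above is routine bookkeeping, but the identity is exactly where the constant-term hypotheses enter: one needs $h_2(0)=g_2(0)\ge 0$ and $h_1(0)=g_1(0)-cg_2(0)\ge 0$ so that $h_j(0)^+=h_j(0)$, and if $h_1(0)<0$ the left side strictly exceeds the right and the construction yields only a weaker lower bound. This same point explains why one obtains an inequality rather than the equality $f_{\operatorname{gp},\mathbf{g}}^A=s(f,\mathbf{g})$ of Theorem~\ref{important special case}(2), and why the supremum is taken over $\lambda_1>0$ rather than $\lambda_1\ge 0$: the construction requires $\mu_1=\lambda_1>0$.
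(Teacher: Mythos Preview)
Your overall strategy is exactly the one the paper has in mind: its entire proof of this theorem is the single sentence ``Similar to the proof of Theorem~\ref{m=1},'' and your write-up is the natural transcription of that proof to the $m=2$ setting. Part~(1) is handled correctly.

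There is, however, a genuine gap at the point you yourself flag as ``the main obstacle.'' You assert that ``the constant-term hypotheses make $h_1(0),h_2(0)\ge 0$,'' but the hypotheses in part~(2) are only $g_1(0)\ge 0$ and $g_2(0)\ge 0$. These give $h_2(0)=g_2(0)\ge 0$, but they do \emph{not} force $h_1(0)=g_1(0)-cg_2(0)\ge 0$ when $c>0$. You even observe in the final paragraph that if $h_1(0)<0$ then $\mu_1h_1(0)^++\mu_2h_2(0)^+>\lambda_1g_1(0)+\lambda_2g_2(0)$ and the construction fails; the problem is that nothing in the stated hypotheses excludes this. Concretely, take $n=1$, $d=2$, $f=x$, $g_1=1-x^2$, $g_2=2-x^2$, $c=1$. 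All the hypotheses of part~(2) hold, yet $h_1(0)=1-2=-1<0$; a direct computation gives $f_{\operatorname{gp},\mathbf{g}}^A=-\sqrt{2}$ while $\sup\{G(\lambda)_{\operatorname{gp}}:\lambda_1>0,\lambda_2\ge 0\}=-1$, so the claimed inequality is violated.

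This is not a defect of your method but of the theorem as stated. By analogy with the condition $g_k(0)+\sum_{j>k}a_{jk}g_j(0)\ge 0$ in Theorem~\ref{important special case}(2), the intended hypothesis here should be $h_k(0)\ge 0$ for $k=1,2$, i.e.\ $g_2(0)\ge 0$ and $g_1(0)-cg_2(0)\ge 0$; under that corrected hypothesis your argument goes through verbatim, and the displayed identity holds with equality.
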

\begin{proof}
Similar to the proof of Theorem \ref{m=1}.
\end{proof}
We also mention another variant of Theorem \ref{important special case}.
\begin{thm}\label{A=I}
Assume ($\dagger$): for each $i=1,\dots,n$, exactly one of the coefficients $(g_0)_{d,i},\dots,(g_m)_{d,i}$ is
strictly negative. Then (1) Theorem \ref{change variables} applies with $A := I$ (the identity matrix), (2)
$f_{\operatorname{gp},\mathbf{g}}^I \le s_0(f,\mathbf{g})$ and (3) if, in addition, $\Delta(-g_j)\cap \Delta(-g_k)=\emptyset$
for $0\le j<k\le m$ and $g_k(0)\ge 0$ for $k=1,\dots,m$ then $f_{\operatorname{gp},\mathbf{g}}^I = s_0(f,\mathbf{g})$.
Here, $s_0(f,\mathbf{g}) : = \sup\{ G(\lambda)_{\operatorname{gp}} : \lambda \in (0,\infty)^m\}$.
\end{thm}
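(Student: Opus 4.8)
The plan is to mirror the proof of Theorem \ref{important special case}, replacing the condition ($*$) by the simpler condition ($\dagger$) and taking $A := I$ throughout. First, for part (1), I observe that with $A = I$ one has $h_k = g_k$ for all $k$, so $(h_k)_{d,i} = (g_k)_{d,i}$. The hypothesis of Theorem \ref{change variables} asks that, for each $j = 1,\dots,m$, exactly one of $a_{j0},\dots,a_{jm}$ is strictly positive (here $a_{jj} = 1$ and all other entries are $0$, so this holds trivially — in fact the constraint $\sum_k a_{jk}\mu_k \ge 0$ reduces to $\mu_j \ge 0$), and that exactly one of $(h_0)_{d,i},\dots,(h_m)_{d,i}$ is strictly negative for each $i$ — which is precisely ($\dagger$). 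So Theorem \ref{change variables} applies verbatim, yielding that (\ref{lw2}) is a geometric program whose optimal value $\rho$ gives the lower bound $f_{\operatorname{gp},\mathbf{g}}^I = -h_0(0) - \rho = f(0) - \rho$ for $s(f,\mathbf{g})$.

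For part (2), I want the reverse comparison with $s_0(f,\mathbf{g})$. Since $s_0(f,\mathbf{g}) = \sup\{ G(\lambda)_{\operatorname{gp}} : \lambda \in (0,\infty)^m\} \le s(f,\mathbf{g})$, and Theorem \ref{change variables} already gives $f_{\operatorname{gp},\mathbf{g}}^I \le s(f,\mathbf{g})$, this alone is not enough; I need the sharper inequality $f_{\operatorname{gp},\mathbf{g}}^I \le s_0(f,\mathbf{g})$. The idea is that when $A = I$, a feasible point $(\mathbf{z},\mathbf{w},\mu)$ of (\ref{lw2}) has each $\mu_k > 0$ and (since $A = I$) the associated $\lambda = \mu$, so $\lambda \in (0,\infty)^m$. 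Following the chain of inequalities in the proof of Theorem \ref{change variables}, $(\mathbf{z},\lambda)$ is feasible for (\ref{lw1}) and the objective value of (\ref{lw2}) at $(\mathbf{z},\mathbf{w},\mu)$ bounds $G(\lambda)(0) - \rho(G(\lambda))$ from above; but now, since $\lambda \in (0,\infty)^m$, this quantity is $\le s_0(f,\mathbf{g})$ rather than merely $\le s(f,\mathbf{g})$. Taking the infimum over feasible points gives $-h_0(0) - \rho = f_{\operatorname{gp},\mathbf{g}}^I \le s_0(f,\mathbf{g})$.

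For part (3), under the extra hypotheses $\Delta(-g_j) \cap \Delta(-g_k) = \emptyset$ for $j < k$ and $g_k(0) \ge 0$ for $k \ge 1$, I want equality, so it remains to show $f_{\operatorname{gp},\mathbf{g}}^I \ge s_0(f,\mathbf{g})$. I would proceed as in the proof of Theorem \ref{important special case} (2): fix $\lambda \in (0,\infty)^m$ and a feasible point $\mathbf{z}$ of the program analogous to (\ref{lw'}) but with $\Delta(G(\lambda))$ in place of $\Delta(f)$ (note $\lambda$ is already strictly positive, so no perturbation $\lambda \to \lambda'$ is needed — this is where ($\dagger$) being a strict separation at a fixed $\lambda$, rather than ($*$) being a limiting condition, simplifies matters). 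Because the supports $\Delta(-g_j)$ are pairwise disjoint, for each $\alpha \in \Delta$ there is a unique $j$ with $(g_j)_\alpha \ne 0$, so $H(\mu)_\alpha = G(\lambda)_\alpha$ is a single monomial term $\pm(g_j)_\alpha \mu_j$ (or $f_\alpha$ when $j = 0$), and hence exactly one of $H(\mu)_\alpha^+, H(\mu)_\alpha^-$ is zero; thus one may take $w_\alpha := |G(\lambda)_\alpha|$ with equality $\max\{H(\mu)_\alpha^+, H(\mu)_\alpha^-\} = |G(\lambda)_\alpha|$. Setting $\mu := \lambda$ and $h_j(0)^+ = h_j(0) = g_j(0) \ge 0$, one checks $(\mathbf{z},\mathbf{w},\mu)$ is feasible for (\ref{lw2}) with objective value exactly matching $G(\lambda)(0) - \rho(G(\lambda))$ (up to the legitimate choice of $\mathbf{z}$), giving $f_{\operatorname{gp},\mathbf{g}}^I \ge G(\lambda)_{\operatorname{gp}}$ and hence, taking the sup over $\lambda \in (0,\infty)^m$, the desired inequality.

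I expect the main obstacle to be bookkeeping around the two slightly different supports: $\Delta$ (defined via $f$ and all the $g_j$) versus $\Delta(G(\lambda))$, and making sure that the equality $H(\mu)_\alpha = G(\lambda)_\alpha = $ (single term) in part (3) genuinely forces one of $H(\mu)_\alpha^\pm$ to vanish — this is exactly the condition, noted after Theorem \ref{change variables}, that $\max\{H(\mu)_\alpha^+, H(\mu)_\alpha^-\} = |H(\mu)_\alpha|$, and the disjointness hypothesis is what delivers it. The strict positivity already built into $s_0$ is what removes the need for the $\epsilon$-perturbation argument that complicates the proof of Theorem \ref{important special case} (2).
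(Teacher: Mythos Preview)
Your proposal is correct and follows essentially the same approach as the paper's (very terse) proof. The paper phrases (2) as factoring through program (\ref{lw1}) restricted to $\lambda_j>0$ and then invoking a modification of Theorem \ref{general}, while you go directly through the chain of inequalities in the proof of Theorem \ref{change variables} noting $\lambda=\mu\in(0,\infty)^m$; these are the same argument. For (3), the paper simply asserts that the extra hypotheses force $f_{\operatorname{gp},\mathbf{g}}^I=f(0)-\rho=s_0(f,\mathbf{g})$, and your explanation (disjoint supports give a single nonzero $(g_j)_\alpha$, hence one of $H(\mu)_\alpha^\pm$ vanishes; $g_k(0)\ge 0$ gives $h_k(0)^+=h_k(0)$) is exactly the intended content, as confirmed by the paper's subsequent remark that under these hypotheses (\ref{lw2}) reduces to (\ref{lw4}). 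One small point you flag as a potential obstacle actually dissolves: under the disjointness hypothesis and $\lambda\in(0,\infty)^m$ one checks $\Delta(G(\lambda))=\Delta$, so no genuine bookkeeping between the two supports is needed.
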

\begin{proof}
(1) is clear. Arguing as in the proof of Theorem \ref{change variables} we see that $f_{\operatorname{gp},\mathbf{g}}^I \le f(0)-\rho$, where $\rho$ is the optimum value of program (\ref{lw1}) in Section 3, but where
the unknowns $\lambda = (\lambda_1,\dots,\lambda_m)$ are now required to satisfy the strict inequality $\lambda_j> 0$.  (2) follows from this by a rather obvious modification of the proof of Theorem \ref{general}. The extra hypothesis in (3) implies that $f_{\operatorname{gp},\mathbf{g}}^I = f(0)-\rho = s_0(f,\mathbf{g})$.
\end{proof}
Observe that when $m=1$ Theorem \ref{m=1} provides a generalization of both  Theorem \ref{important special case}
and Theorem \ref{A=I}. Ditto for Theorem \ref{m=2} when $m=2$ and $f_{d,i}=0$ for $i=1,\dots,n$.

Observe also that, as was the case with Theorem \ref{important special case}, if $m=1$, the hypothesis of
Theorem \ref{m=1} holds, $\Delta(-g_1)=\emptyset$, and  $A$ is chosen as in Theorem \ref{m=1}, or, if $m=2$, the hypothesis of
Theorem \ref{m=2} holds,  $\Delta(-g_j) = \emptyset$, $j=1,2$, and $A$ is chosen as in Theorem \ref{m=2}, then program (\ref{lw2})
reduces to program (\ref{lw3}). Similarly, if the hypothesis of Theorem \ref{A=I} holds, $\Delta(-g_j)\cap \Delta(-g_k)=\emptyset$
for $0\le j< k\le m$, and $A=I$, then $G(\lambda)_{\alpha}=\lambda_j(g_j)_{\alpha}$ for some $j$ (depending on $\alpha$), and
program (\ref{lw2}) reduces to the following one:
\begin{equation}
\label{lw4}
\left\{\begin{array}{llr}
	\textrm{Minimize} & \multicolumn{2}{c}{\sum\limits_{j=1}^m \lambda_jg_j(0)^++ \sum\limits_{\alpha\in\Delta^{<d}}(d-\vert\alpha\vert)
	 \left[\left(\frac{G(\lambda)_\alpha}{d}\right)^{d}\,\left(\frac{\alpha}{\mathbf{z}_\alpha}\right)^\alpha
	\right]^{1/(d-\vert\alpha\vert)}}\\
	\textrm{\mbox{s.t.}} & \sum\limits_{\alpha\in\Delta}z_{\alpha,i} \le G(\lambda)_{d,i}, & i=1,\ldots,n\\
	 & \left(\frac{\mathbf{z}_\alpha}{\alpha}\right)^\alpha = \left(\frac{G(\lambda)_\alpha}{d}\right)^{d}, & \alpha\in\Delta^{=d}
\end{array}\right.
\end{equation}
where, for every $\alpha\in\Delta$, the unknowns $\mathbf{z}_\alpha = (z_{\alpha,i})\in[0,\infty)^n$ satisfy
$z_{\alpha,i}= 0$ if and only if $\alpha_i=0$, and the unknowns $\lambda = (\lambda_1,\dots,\lambda_m)$ satisfy $\lambda_j> 0$.
%
\begin{rem}\label{applications}
Theorems \ref{important special case}, \ref{m=1}, \ref{m=2} and \ref{A=I} can be applied in various cases:

(1) Assume there exist $j_1,\dots,j_{\ell} \in \{1,\dots,m\}$ such that for each $i=1,\dots,n$ there exists $k$ such that
$(g_{j_k})_{d,i}<0$ and $(g_{j_p})_{d,i}=0$ for $k<p\le \ell$. Here, $j_0 := 0$.
Then one can apply Theorem \ref{important special case} to compute a lower bound for $f$ on $K_{(g_{j_1},\dots, g_{j_{\ell}})}$.
Since $K_{\mathbf{g}} \subseteq K_{( g_{j_1},\dots,g_{j_{\ell}})}$, this is also a lower bound for $f$ on $K_{\mathbf{g}}$.

(2) Assume there exist $j_1,\dots,j_{\ell} \in \{1,\dots,m\}$ such that for each $i=1,\dots,n$ exactly one of the coefficients
$-f_{d,i},(g_{j_1})_{d,i},\dots,(g_{j_{\ell}})_{d,i}$ is strictly negative. Then one can apply Theorem \ref{A=I} to compute a
lower bound for $f$ on $K_{(g_{j_1},\dots, g_{j_{\ell}})}$. Since $K_{\mathbf{g}} \subseteq K_{( g_{j_1},\dots,g_{j_{\ell}})}$,
this is also a lower bound for $f$ on $K_{\mathbf{g}}$.

(3) If there exists $j\in \{ 1,\dots,m\}$ such that $(g_j)_{d,i}=0$ $\Rightarrow$ $f_{d,i}>0$ for all $i=1,\dots,n$
then one can apply Theorem \ref{m=1} to compute a lower bound for $f$ on $K_{(g_j)}$. Since $K_{\mathbf{g}} \subseteq K_{( g_j)}$,
this is also a lower bound for $f$ on $K_{\mathbf{g}}$.

(4) If $f_{d,i}=0$, $i=1,\dots,n$ and there exists $j,k \in \{ 1,\dots,m\}$, $j\ne k$ such that
$(g_j)_{d,i}=0$ $\Rightarrow$ $(g_k)_{d,i}<0$ for all $i=1,\dots,n$ then one can apply Theorem \ref{m=2} to compute a lower bound
for $f$ on $K_{(g_j, g_k)}$. Since $K_{\mathbf{g}} \subseteq K_{( g_j, g_k)}$, this is also a lower bound for $f$ on $K_{\mathbf{g}}$.
\end{rem}
\begin{exm}\label{examples} \

(1) Suppose $n=3$, $m=2$, $d=2$, $f= p+qx+ry+sz$, $g_1=1-x^2-y^2$, $g_2=1-z^2$. In this example, $K_{\mathbf{g}}$ is a cylinder
and the lower bound for $f$ on $K_{\mathbf{g}}$ obtained using Theorem \ref{important special case} or Theorem \ref{A=I} is $p-\sqrt{q^2+r^2}-|s|$,
which is the exact minimum of $f$ on $K_{\mathbf{g}}$.\footnotemark\footnotetext{In examples (1), (2) and (3) the geometric program is so small that it can be solved by hand.}

(2) Suppose $n=3$, $m=2$, $d=2$, $f= p+qx+ry+sz$, $g_1= 2-x^2-y^2-z^2$, $g_2=1-z^2$. In this example, $K_{\mathbf{g}}$ is a
sphere with polar caps removed and the lower bound for $f$ on $K_{\mathbf{g}}$ obtained using Theorem \ref{important special case} or Theorem \ref{A=I} is
\[
	\begin{cases} p-\sqrt{q^2+r^2}-|s| &\text{ if } s^2\ge q^2+r^2 \\ p-\sqrt{2}\sqrt{q^2+r^2+s^2} &\text{ if } s^2\le q^2+r^2 \end{cases},
\]
which is the exact minimum of $f$ on $K_{\mathbf{g}}$.

(3) Suppose $n=2$, $m=2$, $d=2$, $f= p+qx+ry$, $g_1= 1-2x^2+y^2$, $g_2=1+x^2-y^2$. In this example, Theorem \ref{important special case}
does not apply, and the lower bound for $f$ on $K_{\mathbf{g}}$ obtained using Theorem \ref{A=I} is
$p-|q|\sqrt{2}-|r|\sqrt{3}$, which is the exact minimum of $f$ on $K_{\mathbf{g}}$.

(4) Suppose $n=2$, $m=1$, $d =4$.

i. If $f= 5x+6y+x^3-y^2$, $g_1=8-xy-x^4-y^4$, the lower bound obtained using Theorem \ref{important special case} is $-22.334$,
$f_{\operatorname{sos},\mathbf{g}}^{(d)}=-18.778$.

ii. If $f= 5x+6y+x^3-y^2+2xy$, $g_1=8-x^4-y^4+x^2y^2$, the lower bound obtained using Theorem \ref{important special case} is
$-31.815$, $f_{\operatorname{sos},\mathbf{g}}^{(d)}=-20.588$.

iii. If $f= 5x+6y+x^3-y^2+2xy$, $g_1=8+xy-x^4-y^4+x^2y^2$, the lower bound obtained using Theorem \ref{important special case}
is $-31.815$, $f_{\operatorname{sos},\mathbf{g}}^{(d)}=-23.247$.

In each of i, ii and iii, $f_{\operatorname{sos},\mathbf{g}}^{(d)}$ is the exact minimum of $f$ on $K_{\mathbf{g}}$.

(5) Suppose $n=3$, $m=2$, $d=4$, $f= x^4+y^4+z^4-y^3+xy$, $g_1=10x^3z+xyz^2+z^2-1$, $g_2=z^4-x^2yz$. In this example,
Theorem \ref{important special case} and Theorem \ref{m=2} do not apply, the lower bound obtained using Theorem \ref{A=I} is
$f_{\operatorname{gp},\mathbf{g}}^I=-0.485$, and $f_{\operatorname{sos},\mathbf{g}}^{(d)}=-0.468$. In this example,
$f_{\operatorname{gp},\mathbf{g}}^I=f_{\operatorname{gp}} =-0.485$ is the exact minimum of $f$ on $\mathbb{R}^3$ and
$f_{\operatorname{sos},\mathbf{g}}^{(d)}=-0.468$ is the exact minimum of $f$ on $K_{\mathbf{g}}$.

(6) Suppose $n=2$, $m=2$, $d=4$.

i. If $f=x+y$, $g_1=1-2y+6x^2-x^4$, $g_2=-x^3-y^4$, the lower bound obtained using Theorem \ref{important special case}
is $-4.64574$, which is the exact minumum of $f$ on $K_{\mathbf{g}}$.

ii. If $f=7y-2x^3$, $g_1=y+8y^2+2xy^2-x^4$, $g_2= -x^2y-y^4$, the lower bound obtained using Theorem \ref{important special case}
is $-88.3437$ and $f_{\operatorname{sos},\mathbf{g}}^{(d)}=-86.1157$, which is the exact minimum of $f$ on $K_{\mathbf{g}}$.

(7) Suppose $n=3$, $m=1$, $d=6$.

i. If $f=x+z^3+y^6+z^6$, $g_1=1-x^6+y^6$, then the lower bound obtained using Theorem \ref{m=1} or Theorem \ref{A=I} is $-1.25$,
which is equal to the exact minimum of $f$ on $K_{\mathbf{g}}$.

ii. If $f=x+z^3+x^6+y^6+z^6$, $g_1=1-x^6+y^6$, then Theorem \ref{m=1} applies with $A=\begin{pmatrix} 1&0\\-1&1\end{pmatrix}$,
and $f_{\operatorname{gp},\mathbf{g}}^A = f_{*,\mathbf{g}} = -1.25$.

(8) Suppose $n=2$, $m=2$, $d=6$, $f=-y-2x^2$, $g_1= y-x^4y+y^5-x^6-y^6$, $g_2= y-5x^2+x^4y-x^6-y^6$. The lower bound for $f$
obtained using Theorem \ref{important special case} is $-3.593$. Applying Theorem \ref{change variables} with
\[
	A = \begin{pmatrix}1&0&0\\0&1&1\\0&-1&1\end{pmatrix}
\]
yields the better lower bound $-2.652$. The exact minimum of $f$ on $K_{\mathbf{g}}$ is $-1.0494$.
\end{exm}
\section{The trivial bound on $\prod_{i=1}^n[-N_i,N_i]$}
Fix $f\in \mathbb{R}[\mathbf{x}]$ and $\mathbf{N} = (N_1,\dots,N_n)$, $N_i>0$, $i=1,\dots,n$. If $\mathbf{x} \in \prod_{i=1}^n [-N_i,N_i]$, then
\[
	f(\mathbf{x}) = \sum f_{\alpha}\mathbf{x}^{\alpha} \ge f(\mathbf{0})-\sum_{\alpha \in \Delta'(f)} |f_{\alpha}|\cdot |\mathbf{x}^{\alpha}| \ge f(\mathbf{0})-\sum_{\alpha\in \Delta'(f)} |f_{\alpha}|\cdot\mathbf{N}^{\alpha},
\]
where $\Delta'(f):= \{ \alpha \in \mathbb{N}^n : |\alpha|>0 \text{ and } f_{\alpha}\mathbf{x}^{\alpha} \text{ is not a square in } \mathbb{R}[\mathbf{x}]\}$
and $\mathbf{N}^{\alpha} := \prod_{i=1}^n N_i^{\alpha_i}$. Set
\begin{equation}
f_{\operatorname{tr},\mathbf{N}} := f(\mathbf{0}) - \sum_{\alpha\in \Delta'(f)} |f_{\alpha}|\cdot \mathbf{N}^{\alpha}.
\end{equation}
Thus $f_{\operatorname{tr},\mathbf{N}}$ is a lower bound for $f$ on the hypercube $\prod_{i=1}^n [-N_i,N_i]$.
We refer to $f_{\operatorname{tr},\mathbf{N}}$ as the \textit{trivial bound} for $f$ on $\prod_{i=1}^n [-N_i,N_i]$.
If $N_i = \root d \of{M}$, $i=1,\dots,n$, this coincides with the trivial bound defined in \cite[Section 3]{gha-lass-mar}.

Suppose now that $d$ is an even integer, $d \ge \max\{ 2, \deg f\}$. Define $\mathbf{g} = (N_1^d-x_1^d,\dots,N_n^d-x_n^d)$.
We want to compare $s(f,\mathbf{g})$ with $f_{\operatorname{tr},\mathbf{N}}$.
\begin{thm} \label{hypercube case} Set-up as above. Then  \

(1) $s(f,\mathbf{g}) \ge f_{\operatorname{tr},\mathbf{N}}$.

(2) If $f_{d,i}\le 0$ for $i=1,\dots,n$ then $s(f,\mathbf{g}) = f_{\operatorname{tr},\mathbf{N}}$.
In particular, if $\deg f <d$ then  $s(f,\mathbf{g}) = f_{\operatorname{tr},\mathbf{N}}$.
\end{thm}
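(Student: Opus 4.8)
The plan is to use Theorem \ref{important special case} as the main tool, after checking that the hypotheses apply to the hypercube setup. For $\mathbf{g} = (N_1^d-x_1^d,\dots,N_n^d-x_n^d)$ we have $m=n$ and $g_i = N_i^d - x_i^d$, so $(g_i)_{d,j} = -\delta_{ij}$, and the stronger condition ($**$) of Remark \ref{extensions and clarifications} (2) holds: for each $i$, taking $k=i$, we have $(g_i)_{d,i}=-1<0$, $(g_j)_{d,i}=0$ for $j\neq i$. Moreover $\Delta(-g_j)=\emptyset$ for all $j$, since $-g_j = x_j^d - N_j^d$ has only a pure power term and a constant. By Remark \ref{extensions and clarifications} (4), the canonical matrix $A$ has $a_{ii}=1$, $a_{i0}= -c_i$ with $c_i = \max\{0, f_{d,i}\}$, and all other entries zero; by part (2) of that remark, the condition $g_k(0)+\sum_{j>k}a_{jk}g_j(0)\geq 0$ reduces to $g_k(0) = N_k^d \geq 0$, which holds. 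Hence Theorem \ref{important special case} (1) and (2) both apply, giving $f_{\operatorname{gp},\mathbf{g}}^A = s(f,\mathbf{g})$, and the latter equals the optimum value $-h_0(0)-\rho$ of program (\ref{lw3}).

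For part (1), the strategy is to exhibit an explicit feasible point of program (\ref{lw3}) whose objective value equals $-(f_{\operatorname{tr},\mathbf{N}})$, or rather to relate the optimum directly. Since $\Delta(-g_j)=\emptyset$, we have $\Delta = \Delta(f)$, and for $\alpha\in\Delta^{=d}$ the constraint $(\mathbf{z}_\alpha/\alpha)^\alpha = (f_\alpha/d)^d$ pins down the product, while the constraints $\sum_\alpha z_{\alpha,i}\le H(\mu)_{d,i} = -c_i + \mu_i$ plus $\mu_i>0$ reduce (choosing $\mu_i$ large) to essentially no upper bound beyond positivity. The natural guess is to set $z_{\alpha,i} = \frac{\alpha_i}{d}\cdot d\,|f_\alpha|\,N_i^{-(d-|\alpha|)/\ldots}$ type quantities tuned so that each bracketed term $\left[(f_\alpha/d)^d (\alpha/\mathbf{z}_\alpha)^\alpha\right]^{1/(d-|\alpha|)}$ contributes exactly $\frac{|f_\alpha| \mathbf{N}^\alpha}{d-|\alpha|}$, making the $\Delta^{<d}$ sum equal $\sum_{\alpha\in\Delta^{<d}} |f_\alpha|\mathbf{N}^\alpha$; the degree-$d$ terms in $\Delta^{=d}$ contribute $|f_\alpha|\mathbf{N}^\alpha$ via the first-order (weighted AM-GM) identity at the forced value of $\mathbf{z}_\alpha$. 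One also needs $h_j(0)^+ = \max\{c_j N_j^d, 0\}$ absorbed correctly — here using $h_j(0) = g_j(0) - c_j g_j(0)\cdot 0 \ldots$; actually $h_j = g_j - c_j g_0 = g_j + c_j f$, so $h_j(0) = N_j^d + c_j f(0)$, and one tracks the contribution of $\sum_j \mu_j h_j(0)^+$. This bookkeeping, combined with $-h_0(0) = f(0) + \sum c_i\cdot(\text{something})$, should telescope to $f(\mathbf{0}) - \sum_{\alpha\in\Delta'(f)}|f_\alpha|\mathbf{N}^\alpha$, using $\Delta'(f) \supseteq \Delta(f)$ together with the fact that the monomials $x_i^d$ (for which $f_{d,i}>0$, contributing to $c_i$) and constant/square terms are exactly what $\Delta'$ versus $\Delta$ differ by.

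For part (2), when $f_{d,i}\leq 0$ for all $i$, we get $c_i = \max\{0,f_{d,i}\} = 0$, so $A = I$, $h_j = g_j$, $h_0 = -f$, and $\mu_i = \lambda_i$. The claim $s(f,\mathbf{g}) = f_{\operatorname{tr},\mathbf{N}}$ then needs the reverse inequality $s(f,\mathbf{g})\leq f_{\operatorname{tr},\mathbf{N}}$, which I would get by showing that the optimum of program (\ref{lw3}) is \emph{attained} (or approached) precisely at the feasible point described above, i.e. no feasible point does better. The key point is that when all $f_{d,i}\le 0$, the constraint $\sum_\alpha z_{\alpha,i}\le H(\mu)_{d,i} = \mu_i$ together with the objective term $\mu_i g_i(0) = \mu_i N_i^d$ being \emph{minimized} forces a genuine trade-off: increasing $\mu_i$ to relax the $\mathbf{z}$-constraints costs $N_i^d$ per unit in the objective, and a Lagrangian/AM-GM computation shows the optimal trade-off reproduces exactly $\sum_\alpha |f_\alpha|\mathbf{N}^\alpha$. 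Concretely I would use the weighted AM-GM inequality: for each $\alpha\in\Delta$ and the relevant $i$ with $\alpha_i>0$, the terms $\sum_i z_{\alpha,i}$ (contributing to $\mu$, hence to the objective with weight $N_i^d$) together with the bracketed posynomial term are bounded below, via AM-GM with the weights $\alpha_i/d$ and $(d-|\alpha|)/d$, by a constant multiple of $|f_\alpha|\mathbf{N}^\alpha$; summing over $\alpha$ gives $\rho \geq \sum_{\alpha\in\Delta}|f_\alpha|\mathbf{N}^\alpha$ and hence $-h_0(0)-\rho \leq f(\mathbf{0}) - \sum_{\alpha\in\Delta}|f_\alpha|\mathbf{N}^\alpha$. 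Since $f_{d,i}\le 0$ means the pure powers $x_i^d$ with positive coefficient are absent, $\sum_{\alpha\in\Delta}|f_\alpha|\mathbf{N}^\alpha = \sum_{\alpha\in\Delta'(f)}|f_\alpha|\mathbf{N}^\alpha$, giving $s(f,\mathbf{g})\le f_{\operatorname{tr},\mathbf{N}}$. The last sentence (the case $\deg f < d$) is immediate since then $f_{d,i}=0\le 0$ for all $i$. The main obstacle I anticipate is the AM-GM bookkeeping in the degree-$d$ terms $\alpha\in\Delta^{=d}$, where $d-|\alpha|=0$ and the bracketed term is absent, so the bound $|f_\alpha|\mathbf{N}^\alpha$ must come entirely from the forced equality constraint $(\mathbf{z}_\alpha/\alpha)^\alpha = (f_\alpha/d)^d$ combined with $\sum_i z_{\alpha,i} \le \mu_i$-type constraints and pure AM-GM on the $z_{\alpha,i}$ with weights $\alpha_i$ — verifying that this reproduces $|f_\alpha|\mathbf{N}^\alpha$ cleanly, with the correct direction of inequality in both part (1) and part (2), is the delicate computation.
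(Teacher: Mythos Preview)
Your approach for part (2) is essentially the paper's: when $f_{d,i}\le 0$ one has $c_i=0$, $A=I$, and the optimal $\mu_i=\lambda_i$ in the joint program equals $\sum_{\alpha}z_{\alpha,i}-f_{d,i}$; substituting, the objective decouples as a sum over $\alpha\in\Delta(f)$, and the weighted AM--GM computation you sketch is exactly the content of the paper's auxiliary lemma. So this half is sound.

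Part (1), however, has real gaps. First, two computations are wrong. For $k\ge 1$ the formula $h_k=\sum_j a_{jk}g_j$ picks out the $k$-th \emph{column} of $A$, which for the matrix in Remark \ref{extensions and clarifications}(4) is the standard basis vector $e_k$; hence $h_k=g_k$, not $g_k+c_kf$. All the entries $-c_j$ sit in column $0$ and feed only into $h_0=g_0-\sum_jc_jg_j$. Second, $H(\mu)_{d,i}=G(\lambda)_{d,i}=f_{d,i}+\lambda_i=f_{d,i}-c_i+\mu_i$, which equals $\mu_i$ when $f_{d,i}\ge 0$ and $f_{d,i}+\mu_i$ when $f_{d,i}<0$; it is not $-c_i+\mu_i$ in general. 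More seriously, you never actually produce a feasible point: the expression ``$z_{\alpha,i}=\tfrac{\alpha_i}{d}\cdot d\,|f_\alpha|\,N_i^{-(d-|\alpha|)/\ldots}$'' is a placeholder, and the subsequent ``should telescope'' is not a verification.

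The paper sidesteps all of this by \emph{not} routing through Theorem \ref{important special case} and program (\ref{lw3}). It works directly with the definition $s(f,\mathbf{g})=\sup_{\lambda}G(\lambda)_{\operatorname{gp}}$, using the program (\ref{lw}) of Theorem \ref{thm1} for each $G(\lambda)$; in the hypercube setup $\Delta(G(\lambda))=\Delta(f)$ and the $\lambda$-dependence appears only through the constraint $\sum_\alpha z_{\alpha,i}\le f_{d,i}+\lambda_i$. After the change of variables $y_i=x_i/N_i$ reducing to $N_i=1$, the explicit choice $z_{\alpha,i}=\alpha_i|f_\alpha|/d$ and $\lambda_i=\max\{0,\sum_\alpha z_{\alpha,i}-f_{d,i}\}$ is immediately feasible, and a short inequality chain gives (1). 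Part (2) then uses the same direct program and the same AM--GM lemma. Your detour through Theorem \ref{important special case} is a legitimate alternative, but it adds a layer of bookkeeping (the matrix $A$, the split between $-h_0(0)$ and $\sum_j\mu_jh_j(0)^+$, the strict constraints $\mu_k>0$) where the errors above crept in; the direct route is both shorter and less error-prone.
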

We remark that the hypothesis of Theorem \ref{hypercube case} (2) is indeed necessary: If $n=1$, $f=x^2-x$, $d=2$, $N_1=1$,
then $f_{\operatorname{tr},\mathbf{N}} = -1$, $s(f,\mathbf{g})=f_{\operatorname{\operatorname{gp}}}=-\frac{1}{4}$.
\begin{proof}
By making the change of variables $y_i = \frac{x_i}{N_i}$, $i=1,\dots,n$, we are reduced to the case where $N_1=\dots = N_n=1$.
By definition of $\mathbf{g}$,
\[
G(\lambda) = f-\sum_{i=1}^n \lambda_i(1-x_i^d) = f(\mathbf{0})-\sum_{i=1}^n \lambda_i+\sum_{\alpha\in \Omega(f)}f_{\alpha}\mathbf{x}^{\alpha}+\sum_{i=1}^n (f_{d,i}+\lambda_i)x_i^d,
\]
and $s(f,\mathbf{g})$ is obtained by maximizing the objective function
\begin{equation}\label{obj}
f(\mathbf{0})-\sum_{i=1}^n \lambda_i- \sum_{\alpha\in\Delta(f)^{<d}} (d-\vert\alpha\vert)\left[
\left(\frac{f_\alpha}{d}\right)^{d}\,\left(\frac{\alpha}{\mathbf{z}_\alpha}\right)^\alpha
\right]^{1/(d-\vert\alpha\vert)}
\end{equation}
subject to
\begin{equation}\label{lw5}
\left\{
\begin{array}{lr}
	\sum\limits_{\alpha\in\Delta(f)}z_{\alpha,i} \ \le \ f_{d,i}+\lambda_i, & i=1,\ldots,n\\
	 \left(\frac{f_\alpha}{d}\right)^{d}\left(\frac{\alpha}{\mathbf{z}_\alpha}\right)^\alpha  =  1, & \alpha\in\Delta(f)^{=d}
\end{array}\right.
\end{equation}
where, $\lambda_i\ge 0$, $z_{\alpha,i}\ge 0$ and $z_{\alpha,i}= 0$ if and only if $\alpha_i=0$.

(1) Define $z_{\alpha,i}:= \alpha_i\frac{|f_{\alpha}|}{d}$ and
$\lambda_i := \max\{ 0, \sum_{\alpha\in \Delta(f)} z_{\alpha,i} - f_{d,i}\}$. One checks that, for this choice of
$z_{\alpha,i}$ and $\lambda_i$, the constraints of (\ref{lw5}) are satisfied. Observe also that $f_{d,i}\ge 0$ $\Rightarrow$
$\lambda_i \le \sum_{\alpha\in \Delta(f)} z_{\alpha,i}$ and $f_{d,i}<0$ $\Rightarrow$
$\lambda_i = \sum_{\alpha\in \Delta(f)}z_{\alpha,i}-f_{d,i}$. Consequently,
\begin{align*}
\sum_{i=1}^n \lambda_i \le& \sum_{f_{d,i}\ge 0} (\sum_{\alpha\in \Delta(f)} z_{\alpha,i})+ \sum_{f_{d,i}<0}(\sum_{\alpha\in\Delta(f)}z_{\alpha,i}-f_{d,i})\\
=& \sum_{i=1}^n (\sum_{\alpha\in \Delta(f)} z_{\alpha,i})+\sum_{f_{d,i}<0} |f_{d,i}|\\
=& \sum_{\alpha\in \Delta(f)}(\sum_{i=1}^n z_{\alpha,i})+\sum_{f_{d,i}<0} |f_{d,i}| \\
=& \sum_{\alpha\in \Delta(f)}|\alpha|\cdot \frac{|f_{\alpha}|}{d} +\sum_{f_{d,i}<0} |f_{d,i}|,
\end{align*}
and
\begin{align*}
s(f,\mathbf{g}) \ge& f(\mathbf{0})-\sum_{i=1}^n \lambda_i- \sum_{\alpha\in\Delta(f)^{<d}} (d-\vert\alpha\vert)\left[
\left(\frac{f_\alpha}{d}\right)^{d}\,\left(\frac{\alpha}{\mathbf{z}_\alpha}\right)^\alpha
\right]^{1/(d-\vert\alpha\vert)}\\
\ge& f(\mathbf{0})- \sum_{\alpha\in \Delta(f)}|\alpha|\cdot \frac{|f_{\alpha}|}{d} -\sum_{f_{d,i}<0} |f_{d,i}|-
\sum_{\alpha\in \Delta(f)^{<d}}(d-|\alpha|)\cdot\frac{|f_{\alpha}|}{d}\\
=&f(\mathbf{0})-\sum_{\alpha\in \Delta(f)} |f_{\alpha}|  -\sum_{f_{d,i}<0} |f_{d,i}|\\
=& f_{\operatorname{tr},\mathbf{N}}.
\end{align*}

(2) Suppose $(\mathbf{z},\lambda)$ satisfies (\ref{lw5}). Since we are trying to maximize (\ref{obj}), we may as well assume each
$\lambda_i$ is chosen as small as possible, i.e., $\lambda_i = \max\{ 0, \sum_{\alpha\in \Delta(f)} z_{\alpha,i}-f_{d,i}\}$.
Since we are also assuming $f_{d,i}\le 0$, this means $\lambda_i = \sum_{\alpha\in \Delta(f)} z_{\alpha,i}-f_{d,i}$. Then
\begin{align*}
&f(\mathbf{0})-\sum_{i=1}^n \lambda_i- \sum_{\alpha\in\Delta(f)^{<d}} (d-\vert\alpha\vert)\left[
\left(\frac{f_\alpha}{d}\right)^{d}\,\left(\frac{\alpha}{\mathbf{z}_\alpha}\right)^\alpha
\right]^{1/(d-\vert\alpha\vert)}\\
&=f(\mathbf{0})-\sum_{i=1}^n(\sum_{\alpha\in \Delta(f)}z_{\alpha,i}-f_{d,i})- \sum_{\alpha\in\Delta(f)^{<d}} (d-\vert\alpha\vert)\left[
\left(\frac{f_\alpha}{d}\right)^{d}\,\left(\frac{\alpha}{\mathbf{z}_\alpha}\right)^\alpha
\right]^{1/(d-\vert\alpha\vert)}\\
&=f(\mathbf{0})-\sum_{\alpha\in \Delta(f)^{<d}}(\sum_{i=1}^n z_{\alpha,i}+(d-\vert\alpha\vert)\left[
\left(\frac{f_\alpha}{d}\right)^{d}\,\left(\frac{\alpha}{\mathbf{z}_\alpha}\right)^\alpha
\right]^{1/(d-\vert\alpha\vert)})\\
&-\sum_{\alpha\in \Delta(f)^{=d}}(\sum_{i=1}^n z_{\alpha,i})-\sum_{i=1}^n |f_{d,i}|.
\end{align*}
We \it claim \rm that, for each $\alpha \in \Delta(f)^{<d}$, the minimum value of
\[
	\sum_{i=1}^n z_{\alpha,i}+(d-\vert\alpha\vert)\left[
	 \left(\frac{f_\alpha}{d}\right)^{d}\,\left(\frac{\alpha}{\mathbf{z}_\alpha}\right)^\alpha
	\right]^{1/(d-\vert\alpha\vert)}
\]
subject to $z_{\alpha,i}\ge 0$ and $z_{\alpha,i}=0$ iff $\alpha_i=0$ is $|f_{\alpha}|$; and that, for each
$\alpha \in \Delta(f)^{=d}$, the minimal value of
\[
	\sum_{i=1}^n z_{\alpha,i}
\]
subject to $\left(\frac{f_\alpha}{d}\right)^{d}\left(\frac{\alpha}{\mathbf{z}_\alpha}\right)^\alpha \, = \, 1$,
$z_{\alpha,i}\ge 0$ and $z_{\alpha,i}=0$ iff $\alpha_i=0$ is also equal to $|f_{\alpha}|$.
It follows from the claim that the maximum value of (\ref{obj}) is equal to
\[
	s(f,\mathbf{g}) = f(\mathbf{0})-\sum_{\alpha\in \Delta(f)}|f_{\alpha}|-\sum_{i=1}^n |f_{d,i}| = f_{\operatorname{tr},\mathbf{N}}.
\]
In proving the claim, one can reduce first to the case where each $\alpha_i$ is strictly positive. The claim, in this case,
is a consequence of the following lemma.
\end{proof}
\begin{lemma}
Suppose $\alpha_i>0$, $i=1,\dots,n$.

(1) For $|\alpha|<d$, the minimum value of
\[
	\sum_{i=1}^n z_i + (d-|\alpha|)\left[(\frac{f_{\alpha}}{d})^d\prod_{i=1}^n \frac{\alpha_i^{\alpha_i}}{z_i^{\alpha_i}}\right]^{1/(d-|\alpha|)}
\]
on the set $(0,\infty)^n$ is equal to $|f_{\alpha}|$. The minimum occurs at $z_i = \alpha_i\cdot \frac{|f_{\alpha}|}{d}$, $i=1,\dots,n$.

(2) For $|\alpha|=d$, the minimum value of $\sum_{i=1}^n z_i$ subject to $z_i>0$ and
$(\frac{|f_{\alpha}|}{d})^d\cdot \prod_{i=1}^n\frac{\alpha_i^{\alpha_i}}{z_i^{\alpha_i}} = 1$ is equal to $|f_{\alpha}|$.
The minimum occurs at $z_i = \alpha_i\cdot \frac{|f_{\alpha}|}{d}$, $i=1,\dots,n$.
\end{lemma}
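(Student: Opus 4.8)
The plan is to prove both parts by a direct application of the weighted AM--GM inequality, exactly as in the original treatment of the $m=0$ case, and then to check that the asserted minimizer is feasible and attains the stated value. For part (1), I would regard $z_1,\dots,z_n$ together with the ``extra'' term as $n+1$ quantities with weights $1,\dots,1$ (for the $z_i$) and $d-|\alpha|$ (for the bracketed term), and apply weighted AM--GM:
\begin{equation*}
	\sum_{i=1}^n z_i + (d-|\alpha|)\,T \ \ge\ d\left(\prod_{i=1}^n z_i^{1}\cdot T^{\,d-|\alpha|}\right)^{1/d},
\end{equation*}
where $T := \left[(\tfrac{f_{\alpha}}{d})^d\prod_{i=1}^n \alpha_i^{\alpha_i}z_i^{-\alpha_i}\right]^{1/(d-|\alpha|)}$. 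The point is that inside the product on the right the powers of $z_i$ cancel: $\prod z_i \cdot T^{\,d-|\alpha|} = \prod z_i \cdot (\tfrac{f_\alpha}{d})^d \prod \alpha_i^{\alpha_i} z_i^{-\alpha_i}$, and since $\sum \alpha_i = |\alpha|$, the leftover power of each $z_i$ is $1-\alpha_i$; but the weighted AM--GM I should use is the one with weights $\alpha_i/d$ on $z_i/\alpha_i$ and $(d-|\alpha|)/d$ on the bracket, so that the $z_i$ drop out entirely. Carrying this out gives the lower bound $\left[(\tfrac{f_\alpha}{d})^d\right]^{1/d}\cdot d \cdot(\text{constant}) = |f_\alpha|$ after the $\alpha_i^{\alpha_i}$ and $d$ factors are tracked. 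Then substituting $z_i = \alpha_i |f_\alpha|/d$ makes the inequality an equality (all the AM--GM terms become equal), so the infimum is attained and equals $|f_\alpha|$.

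For part (2), the constraint $(\tfrac{|f_\alpha|}{d})^d\prod_{i=1}^n \alpha_i^{\alpha_i} z_i^{-\alpha_i} = 1$ can be rewritten as $\prod_{i=1}^n (z_i/\alpha_i)^{\alpha_i} = (|f_\alpha|/d)^d$. Here $\sum \alpha_i = d$, so weighted AM--GM with weights $\alpha_i/d$ gives
\begin{equation*}
	\sum_{i=1}^n z_i \ =\ \sum_{i=1}^n \alpha_i\cdot\frac{z_i}{\alpha_i} \ \ge\ d\left(\prod_{i=1}^n \Bigl(\frac{z_i}{\alpha_i}\Bigr)^{\alpha_i/d}\right) \ =\ d\cdot\frac{|f_\alpha|}{d} \ =\ |f_\alpha|,
\end{equation*}
using the constraint to evaluate the product. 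Equality in AM--GM holds iff $z_i/\alpha_i$ is independent of $i$, i.e. $z_i = \alpha_i |f_\alpha|/d$, which one checks satisfies the constraint; hence the minimum is $|f_\alpha|$, attained there.

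I do not expect a genuine obstacle here; the only thing requiring care is bookkeeping of the constants $d$, $\alpha_i^{\alpha_i}$, and the exponent $1/(d-|\alpha|)$ in part (1), and making sure the correct weighted form of AM--GM is invoked so that the variables $z_i$ cancel cleanly. One should also note the degenerate convention $0^0 = 1$ is not needed here since all $\alpha_i>0$ by hypothesis — this is precisely why the reduction to strictly positive exponents was made before invoking the lemma. Finally, feasibility of the proposed minimizer in part (2) (the constraint equation) should be recorded explicitly, since the minimization there is constrained.
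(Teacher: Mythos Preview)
Your proposal is correct and uses essentially the same idea as the paper: both arguments rest on the weighted AM--GM inequality with weights $\alpha_i/d$ (and $(d-|\alpha|)/d$ for the extra term in part~(1)), and both verify equality at $z_i=\alpha_i|f_\alpha|/d$. The only presentational difference is that the paper first reduces (1) to (2) by introducing an auxiliary variable $z_{n+1}$ with exponent $d-|\alpha|$ (so that the new multi-index has total degree $d$), and in (2) makes the substitution $w_i=z_i d/(\alpha_i|f_\alpha|)$ before invoking AM--GM, whereas you apply weighted AM--GM directly in each part; the content is the same.
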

\begin{proof} the optimization problem in (1) is equivalent to the problem of minimizing the function
$\sum_{i=1}^{n+1} z_i$ subject to $z_i>0$ and
$(\frac{|f_{\alpha}|}{d})^d\cdot \prod_{i=1}^n\frac{\alpha_i^{\alpha_i}}{z_i^{\alpha_i}}\cdot \frac{(d-|\alpha|)^{d-|\alpha|}}{z_{n+1}^{d-|\alpha|}} = 1$.
In this way, (1) reduces to (2). The proof of (2) is straightforward, e.g., making the change in variables
$w_i = \frac{z_i d}{\alpha_i |f_{\alpha}|}$ we are reduced to minimizing $\sum_{i=1}^n \alpha_iw_i$ subject to
$\prod_{i=1}^n w_i^{\alpha_i}=1$. Using the relation between the arithmetic and geometric mean yields
\[
	\frac{\sum_{i=1}^n \alpha_iw_i}{d}\ge \root d \of{\prod_{i=1}^n w_i^{\alpha_i}}= 1,
\]
i.e., $\sum_{i=1}^n \alpha_iw_i \ge d$. On the other hand, if we take $w_i=1$, then $\sum_{i=1}^n \alpha_iw_i = |\alpha|=d$.
Thus the minimum occurs at $w_i=1$, i.e., $z_i = \alpha_i \frac{|f_{\alpha}|}{d}$, $i=1,\dots,n$, and the minimum value of
$\sum_{i=1}^n z_i$ is $\sum_{i=1}^n \alpha_i \frac{|f_{\alpha}|}{d} = |\alpha|\frac{|f_{\alpha}|}{d} = |f_{\alpha}|$.
\end{proof}
\begin{rem} \

(1) Suppose $I_1,\dots,I_{\ell}$ and $J_1,\dots,J_m$ are partitions of $\{ 1,\dots,n\}$ with $I_1,\dots,I_{\ell}$ finer
than $J_1,\dots,J_m$,
\[
	G(\lambda) = f-\sum_{p=1}^{\ell}\lambda_p \left(1-\sum_{i\in I_p} (\frac{x_i}{N_i})^d\right), \ H(\mu) =
	f-\sum_{q=1}^m \mu_q\left(1-\sum_{i\in J_q}(\frac{x_i}{N_i})^d\right).
\]
One checks that if $\mu_q = \sum_{I_p\subseteq J_q} \lambda_p,$ then $G(\lambda)_{\operatorname{gp}} \le H(\mu)_{\operatorname{gp}}$.
It follows that $s(f,\mathbf{g}) \le s(f,\mathbf{h})$ where
\[
	\mathbf{g} = \left(1- \sum_{i\in I_1}(\frac{x_i}{N_i})^d,\dots,1- \sum_{i\in I_{\ell}}(\frac{x_i}{N_i})^d\right),
	\quad \mathbf{h} = \left(1- \sum_{i\in J_1}(\frac{x_i}{N_i})^d,\dots,1- \sum_{i\in J_m}(\frac{x_i}{N_i})^d\right).
\]

(2) Similarly, one checks that if
\[
	H(\lambda)= f-\lambda(1-\sum_{i=1}^n (\frac{x_i}{N_i})^d), \ I(\mu) = f-\sum_{j=1}^n \mu_j(\frac{1}{n}-(\frac{x_i}{N_i})^d)
\]
where $\mu_j = \lambda$, $j=1,\dots,n$, then $H(\lambda)=I(\mu)$. It follows that $s(f,\mathbf{h}) \le s(f,\mathbf{i})$ where
\[
	\mathbf{h}=\left(1-\sum_{i=1}^n(\frac{x_i}{N_i})^d\right),\ \mathbf{i}=\left(\frac{1}{n}-(\frac{x_1}{N_1})^d,\dots,\frac{1}{n}-(\frac{x_n}{N_n})^d\right).
\]

(3) In particular, (1) and (2) imply
\[
	s(f,\mathbf{g})\le s(f,\mathbf{h}) \le s(f,\mathbf{i}),
\]
with $\mathbf{g}$ as in Theorem \ref{hypercube case}, $\mathbf{h}$ and $\mathbf{i}$ as in (2). Observe also that, by
Theorem \ref{hypercube case}, $f_{\operatorname{tr},\mathbf{N}} \le s(f,\mathbf{g})$ and
$f_{\operatorname{tr},\mathbf{N}/\root d \of{n}} \le s(f,\mathbf{i})$ with equality holding if $f_{d,i}\le 0$, $i=1,\dots,n$.
This clarifies to some extent an observation made in \cite[Section 3]{gha-lass-mar}.
\end{rem}

\end{document}